\newtheorem{theorem}{Theorem}[section]
\newtheorem{proposition}[theorem]{Proposition}
\newtheorem{lemma}[theorem]{Lemma}
\newtheorem{corollary}[theorem]{Corollary}
\newtheorem{claim}[]{Claim}
\theoremstyle{definition}
\theoremstyle{remark}
\newtheorem{remark}[theorem]{Remark}
\numberwithin{equation}{section}
\newcommand{\mf}{\mathbf}
\newcommand{\mb}{\mathbb}
\newcommand{\mc}{\mathcal}
\newcommand{\oli}{\overline}
\newcommand{\wti}{\widetilde}
\newcommand{\Int}{\mathrm{Int}}
\newcommand{\dist}{\operatorname{dist}}
\DeclareMathOperator{\Index}{index}
\DeclareMathOperator{\Ric}{Ric}
\DeclareMathOperator{\spt}{spt}
\title{Conformal upper bounds for the volume spectrum}
\date{\today}
\author{Zhichao Wang}
\address{University of Toronto}
\email{zhichao.wang@utoronto.ca}
\begin{document}

\begin{abstract}
In this paper, we prove upper bounds for the volume spectrum of a Riemannian manifold that depend only on the volume, dimension and a conformal invariant.
\end{abstract}

\maketitle

\section{Introduction}
Let $(M^n,g)$ be a closed Riemannian manifold of dimension $n\geq 2$. In \cite{Alm62}, Almgren proved that the space of mod 2 relative cycles $\mc Z_{n-1} (M;\mb Z_2)$ is weakly homotopic to $\mb{RP}^\infty$; see also \cite{LMN16}*{\S 2.5}. By performing a min-max procedure, Gromov \cite{Gro88} defined the {\em volume spectrum of $M$}, which is a sequence of non-decreasing positive numbers 
\[ 0<\omega_1(M, g)\leq\omega_2(M, g)\leq\cdots \leq\omega_k(M, g) \to \infty, \]
depending only on $M$ and $g$. Moreover, Gromov \cite{Gro88} also showed that for each $g$, $\omega_k$ grows like $k^{\frac{1}{n}}$; 
see also Guth \cite{Guth09}. 

For closed Riemannian surfaces (i.e. $n=2$), Y. Liokumovich \cite{Lio16} bounded all of the volume spectrum using the genus of the surfaces. In this paper, we generalize these results and prove conformal upper bounds for all of the volume spectrum of closed Riemannian manifolds.

\begin{theorem}\label{thm:main}
There exists a constant $C=C(n)$ such that for any $n$-dimensional closed Riemannian manifold $(M,g)$, we have 
\[    \omega_k(M,g)\leq C|M|_g^{\frac{n-1}{n}}\max\{k^{\frac{1}{n}},\mathrm{MCV}(M,g)^\frac{1}{n}\}.\]
\end{theorem}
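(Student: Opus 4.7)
The strategy is to use the characterization of $\omega_k(M,g)$ as an infimum over admissible $k$-sweepouts and to construct, for each $k$, a concrete sweepout whose maximum slice mass matches the right-hand side. Since the bound has two qualitatively different regimes separated by the threshold $k \sim \mathrm{MCV}(M,g)$, I would treat them with complementary constructions and then take the better of the two for every $k$.

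\textbf{Regime 1: $k \geq \mathrm{MCV}(M,g)$.} Here the target reduces to the Weyl-type bound $\omega_k \leq C|M|_g^{(n-1)/n} k^{1/n}$. I would follow the Guth--Liokumovich--Marques--Neves scheme: partition $M$ into roughly $k$ chunks of volume $\sim |M|_g/k$, sweep each chunk by a short family of boundaries of relative area $\lesssim (|M|_g/k)^{(n-1)/n}$, and assemble these local sweepouts into a global $k$-sweepout by a cubical-complex chaining in parameter space. The existence of controlled local sweepouts on each chunk is precisely what fails if the geometry of $(M,g)$ is too degenerate at scale $|M|_g/k$; this failure is exactly what the invariant $\mathrm{MCV}$ is meant to measure, so the assumption $k \geq \mathrm{MCV}$ is what unlocks the construction.

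\textbf{Regime 2: $k < \mathrm{MCV}(M,g)$.} The target is $\omega_k \leq C|M|_g^{(n-1)/n} \mathrm{MCV}(M,g)^{1/n}$, which is conformally sensitive. I would pass to a conformal representative $\tilde g \in [g]$ (or a conformal map into a model space such as $S^N$) whose volume realizes $\mathrm{MCV}(M,g)$ up to a dimensional constant, build a $k$-sweepout in $\tilde g$ with width $\lesssim \mathrm{MCV}(M,g)^{(n-1)/n} k^{1/n}$ by the Regime~1 construction applied to $\tilde g$, and then transfer this sweepout back to $g$. Writing $\tilde g = \phi^{-2} g$, the $(n-1)$-volume of a slice changes by $\phi^{n-1}$; integrating in $g$ and applying H\"older with conjugate exponents $n/(n-1)$ and $n$ produces the factor $|M|_g^{(n-1)/n} \mathrm{MCV}(M,g)^{1/n}$ in the bound, which is the desired combination.

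\textbf{Main obstacle.} The hard part is the back-transfer in Regime~2, because the width of a sweepout is a supremum over slices, whereas the H\"older inequality above is an $L^1$-type statement. The natural fix is Gromov's shifting trick: average the candidate sweepout over a one-parameter family of reparametrizations or shifts, so that at least one shifted sweepout has \emph{every} slice area bounded by the spatial average, converting the $L^1$ control into the needed $L^\infty$ control. This consumes one additional parameter, which must be absorbed without degrading the $k^{1/n}$ factor. Implementing this shift compatibly with the cubical chaining of Regime~1, so that the parameter count and the slice-area bound are simultaneously preserved under the conformal transfer, is the principal technical task.
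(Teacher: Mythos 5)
Your overall architecture---assemble a $k$-sweepout from a decomposition of $M$ into domains and invoke the Gromov--Guth bound $\omega_k\leq |\cup\partial V_j|+k\max_j\omega_1(V_j,g)$---is the right starting point and agrees with the paper. But the two-regime split and the conformal back-transfer are not how the paper proceeds, and the hard parts are not actually addressed.

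The paper does not run two separate constructions. After replacing $g_0$ by a suitable rescaling one may assume $|M|_g=|M|_{g_0}$, and then a \emph{single} decomposition is built for all $k$ above a threshold $\bar k\approx |M|_{g_0}/(2C(1))$; for $k<\bar k$ the bound follows trivially from monotonicity, $\omega_k\leq\omega_{\bar k}\leq C|M|^{(n-1)/n}\bar k^{1/n}\leq C|M|^{(n-1)/n}|M|_{g_0}^{1/n}$. Your Regime~2 is therefore unnecessary, and the conformal ``back-transfer'' you propose is the place your plan is most likely to break. Writing $g=\phi^2 g_0$, the slice masses satisfy $|\Sigma|_g=\int_\Sigma\phi^{n-1}\,d\mathcal H^{n-1}_{g_0}$; H\"older against a $\phi^n$-weight bounds this by an $(n-1)$-dimensional integral of $\phi^n$, not by $|M|_g$, so you need a co-area averaging step, which yields only $L^1$-in-parameter control, exactly as you say. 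You invoke a ``shifting trick'' to upgrade to $L^\infty$, but you do not explain how a shift would act on a cubical-complex $k$-sweepout (it is not a one-parameter family that can be slid), nor how the extra parameter is absorbed without degrading the exponent. The paper avoids all of this by never transferring a sweepout between metrics: it uses $g_0$-geodesic balls only as a ``ruler'' to cut $M$, while all masses, sweepouts, and widths are taken directly in $g$.

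In Regime~1, the substance of the proof is missing. You say ``partition $M$ into roughly $k$ chunks of volume $\sim|M|_g/k$ and sweep each,'' but the genuine difficulty is to make the total boundary area $|\cup\partial V_j|$ of the order $|M|^{(n-1)/n}k^{1/n}$: a naive partition into $k$ pieces generically produces boundary that is far too large in a degenerate conformal class. The paper's decomposition is in two stages. First, greedily select $g_0$-balls of radius $r_k\sim(|M|/k)^{1/n}$ whose $g$-content exceeds $\alpha_k=|M|/k$; the length--area method (Proposition~\ref{prop:length area}, applied in the annulus between radii $3r_k$ and $4r_k$) picks a good cutting hypersurface, and the fact that $\Ric_{g_0}\geq-(n-1)$ gives $|D_j|_{g_0}<1$ and disjointness of the core balls, which is what makes the total boundary summable via H\"older in the $g$-volume. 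The left-over ``thin'' region is covered by balls of the same scale, and a Vitali-type bounded-overlap count keeps the boundary total under control. Second, the ``thick'' domains $D_j$ are further subdivided by the isoperimetric inequality of Glynn-Adey--Liokumovich (Theorem~\ref{thm:isoperi by GL}), and the crucial point is that the sum of boundary areas over the resulting binary \emph{tree} of subdomains is controlled by the tree-decomposition estimate of Proposition~\ref{prop:linear growth of N}. Your assertion that ``$k\geq\mathrm{MCV}$ unlocks the construction'' is a hope, not an argument: you do not identify the mechanism (conformally thin/thick dichotomy, bounded overlap, tree-growth estimate) by which the Ricci lower bound on $g_0$ actually enters. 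Without these ingredients the boundary term cannot be bounded, and the proposal does not close.
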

Here $|\Sigma|_{g'}$ is denoted as the $\mc H^m$-measure with respect to $g'$ for any $m$-dimensional submanifold $\Sigma$ of $M$ and 
\[ \mathrm{MCV}(M,g):=\inf\{|M|_{g_0}: g_0 \text{ is a metric conformal to $g$ and $\Ric_{g_0}(M)\geq -(n-1)$}\},  \]
which is called the {\em min-conformal volume} of $M$; c.f. \citelist{\cite{Has11}*{\S 1}\cite{GL17}*{Definition 1.2}}. For simplicity, we use $[g]$ to denote the collection of Riemannian metrics that are conformal to $g$.

\begin{remark}
	We make several remarks here:
	\begin{enumerate}
		\item Note that by the uniformization theorem, $\mathrm{MCV}(M)\leq 2\gamma$ if $M$ is a closed surface of genus $\gamma$. Then Theorem \ref{thm:main} is exactly the same with \cite{Lio16}.
		\item From the proof, the estimates also hold for compact domains $N\subset M$ in Theorem \ref{thm:main}, i.e. for any $g_0\in[g]$ with $\Ric_{g_0}(M)\geq -(n-1)$ and $N\subset M$,
		\[ \omega_k(N,g)\leq C|N|_g^{\frac{n-1}{n}}\max\{k^{\frac{1}{n}},|N|_{g_0}^\frac{1}{n}\} .\]
		\item Our theorem is sharp in some sense. In general cases (not in a conformal class), L. Guth \cite{Guth09}*{Section 5} gave a counterexample to this question for the first width in the volume spectrum. In other words, a closed oriented Riemannian $n$-manifold may have volume 1 and arbitrarily large $\omega_1(M,g)$.
	\end{enumerate}
\end{remark}

\medskip
We point out that Glynn-Adey-Liokumovich \cite{GL17} proved conformal upper bounds for the first width in the volume spectrum (i.e. the case of $k=1$), which will be used in this paper. For closed Riemannian manifolds with non-negative Ricci curvature, the uniform upper bounds for the volume spectrum was proved by Glynn-Adey-Liokumovich \cite{GL17} and Sabourau \cite{Sab17}. Observe that $\mathrm{MCV}(M,g)=0$ provided that there exists $g_0\in[g]$ with $\Ric_{g_0}(M)\geq 0$. Hence we have the following corollary.
\begin{corollary}\label{cor:conformal to Ric geq 0}
Let $(M,g)$ be a closed Riemannian manifold and there exists $g_0\in[g]$ with $\Ric_{g_0}(M)\geq 0$. There exists a constant $C=C(n)$ such that
\[    \omega_k(M,g)\leq C|M|_g^{\frac{n-1}{n}}k^{\frac{1}{n}}.\]
\end{corollary}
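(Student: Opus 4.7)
The plan is to deduce Corollary \ref{cor:conformal to Ric geq 0} directly from Theorem \ref{thm:main} by showing that under the stated hypothesis the min-conformal volume vanishes, $\mathrm{MCV}(M,g)=0$, so the maximum appearing in the statement of the theorem collapses to $k^{1/n}$.

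To establish $\mathrm{MCV}(M,g)=0$, I would exploit the fact that a constant rescaling is a (trivial) conformal change of metric. Concretely, if $g_0\in[g]$ satisfies $\Ric_{g_0}\geq 0$, then for each $c>0$ the rescaled metric $c^2 g_0$ lies in $[g]$ as well. A constant conformal factor does not alter the Levi-Civita connection, so the Ricci tensor, viewed as a $(0,2)$-tensor, is invariant: $\Ric_{c^2 g_0}(X,X)=\Ric_{g_0}(X,X)\geq 0$ for every tangent vector $X$. In particular $\Ric_{c^2 g_0}\geq -(n-1)$ in the sense used to define $\mathrm{MCV}$. On the other hand, volume scales as $|M|_{c^2 g_0}=c^n|M|_{g_0}\to 0$ as $c\to 0^+$, so the infimum in the definition of $\mathrm{MCV}(M,g)$ is indeed $0$.

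Substituting $\mathrm{MCV}(M,g)=0$ into Theorem \ref{thm:main} yields
\[ \omega_k(M,g)\leq C|M|_g^{\frac{n-1}{n}}\max\{k^{\frac{1}{n}},0\}=C|M|_g^{\frac{n-1}{n}}k^{\frac{1}{n}}, \]
which is the desired inequality. There is essentially no analytic obstacle in this deduction: all of the substantive work has been absorbed into the proof of Theorem \ref{thm:main}, and the only point that needs checking is the scaling behaviour of the Ricci lower bound under a constant conformal change, which is immediate from the invariance of the Christoffel symbols under such a rescaling.
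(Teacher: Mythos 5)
Your argument is exactly the one the paper intends: it simply observes that $\mathrm{MCV}(M,g)=0$ whenever some $g_0\in[g]$ has $\Ric_{g_0}\geq 0$, and then invokes Theorem \ref{thm:main}. Your explicit justification via constant rescalings $c^2g_0$ (Ricci invariant as a $(0,2)$-tensor, volume scaling like $c^n\to 0$) is correct and fills in the one-line observation the paper leaves implicit.
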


\medskip
To understand the volume spectrum, Gromov \cite{Gro03}*{Remark 8.4} had an insightful idea that many properties of the eigenvalues of the Laplacian operators have analogs for the volume
spectrum. Furthermore, Gromov conjectured  that the volume spectrum $\{\omega_k (M,g)\}_{k\in\mb N}$ satisfy a Weyl's law, which has been fully proved by Liokumovich-Marques-Neves \cite{LMN16}. For Laplacian operators, Korevaar \cite{Kor93} proved the upper bounds for the Neumann eigenvalues of Riemannian manifolds which are conformal to a manifold with non-negative Ricci curvature. Later, Hassannezhad \cite{Has11} obtained the conformal upper bounds for the eigenvalues of the Laplacian in the conformal class of compact Riemannian manifolds. Our Theorem \ref{thm:main} and Corollary \ref{cor:conformal to Ric geq 0} are volume spectrum analogs of the results of Hassannezhad \cite{Has11} and Korevaar \cite{Kor93}, respectively.

We refer to \citelist{\cite{LY82}\cite{Bus82}\cite{CoMa}} for the estimates of the Laplacian operators and \citelist{\cite{NR04}\cite{BS10}\cite{LZ18}\cite{LM20}} for some developments of sweepouts by cycles.

\medskip
Due to the development of min-max theory by Almgren \citelist{\cite{Alm62}\cite{Alm65}}, Pitts \cite{Pi}, Schoen-Simon \cite{SS} and Marques-Neves \cite{MN16}, the volume spectrum bounds give information about finding minimal hypersurfaces in closed Riemannian manifolds; see \citelist{\cite{MN17}\cite{IMN17}\cite{Song18}\cite{MNS17}\cite{SZ20}}. In particular, using the Multiplicity One Theorem proven by X. Zhou \cite{Zhou20} (see also \cite{CM20}), Marques-Neves \cite{MN18} proved that in any closed Riemannian manifold $M$ of dimension $3\leq n\leq 7$, for generic metrics $g$, there exists a sequence of embedded minimal hypersurfaces $\{\Sigma_k\}$ such that 
\[   \omega_k(M,g)=\mc H^{n-1}(\Sigma_k)  \ \  \ \text{ and } \ \ \ \Index(\Sigma_k)=k.\]
Then our Theorem \ref{thm:main} gives a conformal upper bound for these embedded minimal hypersurfaces.

\smallskip
\subsection*{Idea of the proof} 
Let $(M,g)$ be a closed Riemannian manifold and $g_0\in[g]$ such that $\Ric_{g_0}(M)\geq -(n-1)$. Denote by $B_r^0(p)$ the geodesic ball in $M$ of radius $r$ and center $p$ with respect to $g_0$. For simplicity, we use $|\cdot |$ to denote $|\cdot|_{g}$.

We first recall the construction of $k$-sweepouts by Gromov \cite{Gro88} and Guth \cite{Guth09}*{Section 5}, where they proved that if a closed manifold $M$ is divided into a collection of open domains $\{V_j\}$, then
\[ \omega_k(M,g)\leq \Big|\bigcup_j\partial V_j\Big|+k\max_{j}\omega_1(V_j,g).\] 
Then the challenge is to divide $M$ into suitable domains for each $k$. Without loss of generality, we assume that $|M|=|M|_{g_0}$. By the work of Glynn-Adey-Liokumovich \cite{GL17}, it suffices to consider $k\geq |M|_{g_0}$ and $k>100^n$. We now fix $k$ and let $\alpha=|M|/k$ and $r=\alpha^{\frac{1}{n}}/C$. The aim is to bound $\omega_k(M,g)$ by $C|M|^{\frac{n-1}{n}}k^{\frac{1}{n}}$.

In the first step, we subdivide $M$ into domains $\{D_j\}_{j=1}^{m+1}$ ($m\leq k-1$) such that $|D_j|_{g_0}<1$ for $1\leq j\leq m$, $\sum|\partial D_j|\sim k^{\frac{1}{n}}$ and $|B_r^0(p)\cap D_{m+1}|\leq \alpha$ for all $p\in M$. This can be done inductively by taking $B^0_r(p)$ such that its $g$-volume is larger than $\alpha$. Then the length-area method also enables us to control $|\partial D_j|$; see Claim \ref{claim:thin-thick decomposition} for details.

The next step is to subdivide $D_{m+1}$. To do this, we always take $B_r^0(p)$ that has the largest area in the remaining part with respect to $g$. Then the length-area method allows us to find a domain $V_i$ between $B_{3r}^0(p)$ and $B_{4r}^0(p)$ such that its boundary has a desired bound. The difficulty here is that $|B_{4r}^0(p)|_{g_0}$ is used to bound $|\partial V_i|$. And these balls of radius $4r$ will intersect each other. To overcome this, we proved that for each point $x\in D_{m+1}$, the number of $V_i$ that contains $x$ is bounded by a uniform constant depending only on $n$. Then using the H\"older's inequality, we obtain the desired covers for $D_{m+1}$.

Finally, we are going to subdivide $D_j$ for $1\leq j\leq m$. One of the key ingredients is the isoperimetric inequality developed by Glynn-Adey-Liokumovich \cite{GL17}*{Theorem 3.4} (see also Theorem \ref{thm:isoperi by GL}), which allows us to subdivide $D_j$ into two parts. Repeating this process, we finally subdivide $D_j$ into $\{U_i^j\}_i$ until each small domain has $g$-volume bounded by $|M|/k$. Then using the estimates for the first width in the volume spectrum in \cite{GL17} (see also Theorem \ref{thm:good cover for 1-width} for compact domains), $k\omega_1(U_i^j,g)$ is naturally bounded by $k^{\frac{1}{n}}$. It remains to bound the boundary of $U_i^j$ that lies in $\Int D_j$, which are exactly the isoperimetric hypersurfaces in Theorem \ref{thm:isoperi by GL}. In Subsection \ref{subsec:tree}, a general way will be developed to study this kind of {\em tree decomposition}; see Proposition \ref{prop:linear growth of N} for details. We would like to emphasize that $|D_j|<1$ is crucial to have the desired bounds in this part.

\subsection*{Outline}
This paper is organized as follows. Section \ref{sec:pre} includes some results that will be used in this paper and an upper bound for the tree decomposition. In Section \ref{sec:thin} and \ref{sec:thick}, we provide the details to subdivide the conformally thin and thick domains, respectively. Finally, Section \ref{sec:final bounds} is devoted to prove the main theorem. We also give more details of the proof of Theorem \ref{thm:good cover for 1-width} in Appendix \ref{sec:proof of GL-theorem}.

\subsection*{Acknowledgments} We are grateful to Professor Yevgeny Liokumovich for bringing this problem to our attention and many valuable discussions.

\section{Preliminary}\label{sec:pre}

\subsection{Notations}
In this paper, $(M^n,g)$ is always a closed Riemannian manifold with dimension $n$ and $N$ is a compact domain in $M$ with piecewise smooth boundary. 

We now recall the formulation in \cite{LMN16}. Let $(N,\partial N,g)\subset \mb R^L$ be a compact Riemannian manifold with piecewise smooth boundary. Let $\mc R_k(N;\mb Z_2)$ (resp. $\mc R_{k}(\partial N;\mb Z_2)$) be the space of $k$-dimensional rectifiable currents in $\mb R^L$ with coefficients in $\mb Z_2$ which are supported in $N$ (resp. $\partial N$). Denote by $\mf M$ the mass norm. Let 
\begin{equation}
\label{eq:def from integer rect}
Z_k(N,\partial N;\mb Z_2):=\{T\in \mc{R}_k(N;\mb Z_2) : \spt(\partial T)\subset \partial N \}.
\end{equation}
We say that two elements $S_1,S_2\in Z_k(N,\partial N;\mb Z_2)$ are equivalent if $S_1-S_2\in \mc{R}_k(\partial N;\mb Z_2)$. Denote by $\mc{Z}_k(M,\partial N; \mb Z_2)$ the space of all such equivalence classes. The mass and flat norms for any $\tau\in \mc{Z}_k(N,\partial N;\mb Z_2)$ are defined by 
\[\mf{M}(\tau):=\inf\{\mf{M}(S) : S\in \tau\}\quad \text{ and }\quad \mc{F}(\tau):=\inf\{\mc{F}(S) : S\in \tau\}.\]
The support of $\tau\in \mc{Z}_k(M,\partial M;\mb Z_2)$ is defined by 
\[\spt (\tau):=\bigcap_{S\in \tau}\spt (S).\]

Let $X$ be a finite dimensional simplicial complex. Given $p\in \mb N$, a continuous map in
the flat topology
\[\Phi : X \rightarrow \mc Z_{n} (N, \partial N;\mb Z_2)\]
is called a {\em $k$-sweepout} if the $k$-th cup power of $\lambda = \Phi^*(\bar{\lambda})$ is non-zero in $H^{k}(X; \mb Z_2 )$ where $0 \neq \bar{\lambda} \in H^1(\mc Z_{n} (N, \partial N;\mb Z_2);\mb Z_2) \cong \mb Z_2$. Denote by $\mc P_k(N)$ the set of all $k$-sweepouts that are continuous in the flat topology and {\em have no concentration of mass} (\cite{MN17}*{\S 3.7}), i.e.
\[\lim_{r\rightarrow 0}\sup\{\mf M(\Phi(x)\llcorner B_r(q)):x\in X,q\in M\}=0.\]

In \cite{MN17} and \cite{LMN16}, the {\em $k$-width of codimension one} is defined as
\begin{equation}
\omega_k(N,g):=\inf_{\Phi\in\mc P_k}\sup\{\mf M(\Phi(x)):x\in \mathrm{dmn}(\Phi)\}.
\end{equation}
$\{\omega_k(N,g)\}$ are also called the {\em volume spectrum}.

\begin{remark}
In this paper, we used the integer rectifiable currents, which is the same with \cite{LZ16}. However, the formulations are equivalent to that in \cite{LMN16}; see \cite{GLWZ19}*{Proposition 3.2} for details.
\end{remark}

\subsection{Conformal bounds for the first width}
In \cite{GL17}, Glynn-Adey and Liokumovich proved the uniform bound of the first width for all closed manifolds. With minor modification, their arguments can be applied for compact domains.  Such a uniform bound will be used in this paper later.

Let $g$ and $g_0$ be two Riemannian metrics on $M$. For any $m$-dimensional submanifold $\Sigma$ of $M$, we use $|\Sigma|$ and $|\Sigma|_{g_0}$ to denote the $\mc H^m$-measure with respect to $g$ and $g_0$.
\begin{theorem}[Glynn-Adey-Liokumovich \cite{GL17}]\label{thm:good cover for 1-width}
	Let $N$ be a compact domain of a closed Riemannian manifold $(M,g)$ with dimension $n$. Let $g_0$ be another metric on $M$ which is conformal to $g$ and $\Ric_{g_0}(M)\geq -1$. There exists a constant $K$ depending only on the dimension of $N$ such that 
	\[  \omega_1(N,g)\leq K\cdot |N|^{\frac{n-1}{n}}(1+|N|^{\frac{1}{n}}_{g_0}).\]
\end{theorem}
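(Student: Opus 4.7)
My plan is to combine a Vitali-type covering of $N$ in the metric $g_0$, the co-area inequality, and the Gromov--Guth sweepout lemma, together with an induction on $g$-volume to handle the small pieces. At every stage the crucial input is that $\mathrm{Ric}_{g_0}\geq -1$, which lets us invoke Bishop--Gromov volume comparison uniformly.

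Fix a scale $r\in(0,1]$ to be optimized later. Using Bishop--Gromov on $(M,g_0)$, choose a maximal collection $p_1,\dots,p_\ell\in N$ with the $g_0$-balls $B^0_{r/5}(p_i)$ pairwise disjoint. Then $\{B^0_r(p_i)\}$ covers $N$, has multiplicity bounded by a constant $M(n)$, and satisfies the packing estimate $\ell\leq C(n)|N|_{g_0}/r^n$. Next, apply the co-area inequality to the $g_0$-distance function $d_0(p_i,\cdot)$ on each $B^0_r(p_i)$: since
\[
\int_{r/2}^{r} |\partial B^0_s(p_i)|_g\,ds \;\leq\; |B^0_r(p_i)|_g,
\]
there exists $s_i\in(r/2,r)$ with $|\partial B^0_{s_i}(p_i)|_g\leq (2/r)|B^0_r(p_i)|_g$. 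Form the disjoint pieces $V_i := B^0_{s_i}(p_i)\setminus\bigcup_{j<i}V_j$, which partition $N$ and satisfy, by bounded multiplicity,
\[
\Bigl|\bigcup_i \partial V_i\Bigr|_g \;\leq\; \frac{2}{r}\sum_i |B^0_r(p_i)|_g \;\leq\; \frac{C(n)}{r}\,|N|_g.
\]

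Now apply the Gromov--Guth inequality for the first width,
\[
\omega_1(N,g)\;\leq\;\Bigl|\bigcup_i \partial V_i\Bigr|_g + \max_i \omega_1(V_i,g),
\]
which reduces the problem to bounding $\omega_1(V_i,g)$ for each piece. Each $V_i$ sits inside a $g_0$-ball of radius $r\leq 1$, so by Bishop--Gromov again, $|V_i|_{g_0}\leq C(n)r^n\leq C(n)$. This lets me close the argument by induction on the $g$-volume: if $|V_i|_g$ is below a small threshold depending on $r$, I sweep $V_i$ directly by the level sets of $d_0(p_i,\cdot)$, using co-area once more to bound the maximal $g$-area slice by the requisite isoperimetric quantity; otherwise I iterate Steps~1--2 on $V_i$ with a smaller radius $r'$. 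Since the $g_0$-volume of the pieces is uniformly bounded, the recursion only contributes a bounded multiplicative factor. Finally, choosing $r=\bigl(|N|_g/(1+|N|_{g_0})\bigr)^{1/n}$ balances the boundary term against the inner-width term and produces
\[
\omega_1(N,g)\;\leq\; \frac{C(n)}{r}|N|_g \;\leq\; K\,|N|_g^{\frac{n-1}{n}}\bigl(1+|N|_{g_0}^{\frac{1}{n}}\bigr).
\]

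The main obstacle is controlling the inner widths $\omega_1(V_i,g)$. Writing $g=e^{2\phi}g_0$, the conformal factor may blow up on a tiny $g_0$-ball, so the naive sweepout by $g_0$-spheres can have very large maximal $g$-slice. The induction on $g$-volume is what makes the argument work: at each level of recursion the \emph{$g_0$-volume} of the working domain is already $\lesssim 1$, so the statement one is trying to prove reduces essentially to its small-volume version, which can be handled directly. Adapting the original Glynn-Adey--Liokumovich argument from closed $M$ to a compact domain $N\subset M$ requires only that one carries out the covering and slicing inside $N$ and notes that the boundary cycles produced are admissible for $\mathcal{Z}_{n-1}(N,\partial N;\mathbb Z_2)$; the rest of the argument is formally identical.
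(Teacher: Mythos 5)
The proposal has two significant gaps that make it fail as written, and both occur at exactly the points the paper's own argument works hardest.

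\textbf{The co-area step is wrong.} You claim $\int_{r/2}^{r} |\partial B^0_s(p_i)|_g\,ds \leq |B^0_r(p_i)|_g$, but the co-area formula applied in the metric $g$ to the $g_0$-distance function $f(x)=d_0(p_i,x)$ produces a weight $|\nabla f|_g = e^{-\phi}$ (with $g=e^{2\phi}g_0$); the integral equals $\int_{B^0_r\setminus B^0_{r/2}} e^{-\phi}\, d\mathcal H^n_g$, not the $g$-volume. The correct move, as in Proposition~\ref{prop:length area}, is to apply H\"older's inequality and use that $|\nabla f|_g^n\, d\mathcal H^n_g = |\nabla^0 f|_{g_0}^n\, d\mathcal H^n_{g_0}$, yielding a bound of the form $(1/r)\,|B^0_r(p_i)|_{g_0}^{1/n}\,|B^0_r(p_i)\cap N|_g^{(n-1)/n}$. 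After summing and using Bishop--Gromov this gives $|\bigcup_i\partial V_i|_g \lesssim (1/r)|N|_{g_0}^{1/n}|N|_g^{(n-1)/n}$, \emph{not} $(C/r)|N|_g$. Your choice $r=(|N|_g/(1+|N|_{g_0}))^{1/n}$ is calibrated to cancel the wrong quantity; the corrected bound instead requires $r\approx\min\{1,|N|_{g_0}^{1/n}\}$, which is what the paper in effect uses.

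\textbf{The base case is not established.} You propose to handle small-$g$-volume pieces by sweeping with $g_0$-level spheres, bounding the maximal slice by ``co-area once more.'' But co-area controls the $L^1$ average of slice areas, not the supremum: a domain of tiny $g$-volume may contain a single level set of arbitrarily large $g$-area (the conformal factor can concentrate on a hypersurface). This is precisely the difficulty the paper's Steps~1 and~2 address with a genuinely different construction: a ``staircase'' of nested domains $D_0\supset D_1\supset\cdots\supset D_m$ obtained by solving a free-boundary area-minimizing problem in each small ball, with the minimizer guaranteeing the monotone boundary-area property $|\partial D_j\cap\Int N|\geq|\partial D_{j+1}\cap\Int N|$, and with the monotonicity formula of \cite{GLZ16} used to push the minimizer off the inner ball. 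That minimality is what bounds the \emph{maximal} slice of the resulting sweepout; no pure co-area argument does this. Your proposed recursion on $g$-volume does not substitute for this step either, and you offer no mechanism to control the accumulating boundary terms it would produce; the paper's Step~3 controls the accumulation via the isoperimetric inequality of Theorem~\ref{thm:isoperi by GL}, which halves domains with a definite volume ratio, yielding a convergent geometric recursion. Your informal claim that ``the recursion only contributes a bounded multiplicative factor'' has no such structure behind it.

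In short: the Vitali-covering plus Gromov--Guth skeleton is fine and parallels the paper, but the co-area bound must go through the length-area/H\"older mechanism, and the small-volume base case genuinely requires the minimizing-current staircase (or an equivalent substitute), which your proposal omits.
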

For completeness, we sketch the idea of the proof here and give more details in Appendix \ref{sec:proof of GL-theorem}.

We first handle the case that $N$ has smooth boundary. Following the steps in \cite{GL17}, we decompose the domain with small volume into small pieces so that the argument in \cite{GL17}*{Proposition 2.3} can be applied, and then we use the inductive method in \cite{GL17}*{Theorem 5.1}. 

To decompose $D\subset N$ with small volume, we will cut the part intersecting $\partial N$. Then the regularity theory \cite{Mor03}*{Theorem 4.7} (see also \cite{GLWZ19}*{Theorem 4.7}) for the free boundary minimizing problem is used. In order to show that such a minimizing hypersurface does not intersect a smaller ball, we employ the monotonicity formula in \cite{GLZ16}*{Theorem 3.4}.

Finally, for compact domain with piecewise smooth boundary, we can take a tubular neighborhood $U$ with $|U|\leq 2|N|$ and $|U|_{g_0}\leq 2|N|_{g_0}$ and $U$ has smooth boundary. Then the desired inequality follows from $\omega_1(N,g)\leq \omega_1(U,g)$.

\subsection{The length-area method}
Let $(M,g)$ be a closed Riemannian manifold and $N$ be a compact domain with piecewise smooth boundary. Let $g_0$ be a metric on $M$ which is conformal to $g$ and $\Ric_{g_0}(M)\geq -(n-1)$. Denote by $\nabla$ and $\nabla^0$ the Levi-Civita connection with respect to $g_0$ and $g$. For any compact domain $D\subset M$, denote by 
\[\mc N_r^0(A):=\{x\in M:\dist_{g_0} (x,A)\leq r\},\]
where $\dist_{g_0}(\cdot,\cdot)$ is the distance with respect to $g_0$. Recall that $|\Sigma|$ and $|\Sigma|_{g_0}$ are used to denote the $\mc H^m$-measure with respect to $g$ and $g_0$ if $\Sigma$ is a $m$-dimensional submanifold of $M$.

The following inequality is from the well-known {\em length-area method} (see \citelist{\cite{Gro83}*{\S 5}\cite{GL17}*{Theorem 3.4}\cite{Lio16}*{Lemma 4.1}}) and will be used in this paper. 
\begin{proposition}\label{prop:length area}
For any compact domain $D\subset N$ and $r>0$, there exists a compact domain $V$ of $N$ satisfying $D\subset V\subset \mc N_r^0(D)$ and 
\[ |\partial U\cap \Int N|\leq (1/r)\cdot |N\cap \mc N^0_r(D)\setminus D|_{g_0}^{\frac{1}{n}}\cdot |N\cap \mc N^0_r(D)\setminus D|^{\frac{n-1}{n}}.\] 
\end{proposition}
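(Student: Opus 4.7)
The plan is to take $V$ to be a sublevel set of the $g_0$-distance function to $D$, and to choose the level by the standard pigeonhole / coarea argument, but weighted so as to convert the resulting $g_0$-coarea identity into the mixed $g$--$g_0$ bound via H\"older. Concretely, set $f(x):=\dist_{g_0}(x,D)$ on $N$ and $V_t:=\{x\in N: f(x)\le t\}$ for $t\in(0,r]$. Then $D\subset V_t\subset \mc N_r^0(D)$ for every such $t$, and $\partial V_t\cap \Int N\subset f^{-1}(t)$, which takes care of the inclusion and the ``$\Int N$'' in the statement for free.

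Write $g=\lambda^2 g_0$ for the conformal factor, so that on any $(n-1)$-rectifiable set $\Sigma$ one has $d\mc H^{n-1}_g=\lambda^{n-1}\,d\mc H^{n-1}_{g_0}$ and on $n$-dimensional sets $dV_g=\lambda^n\,dV_{g_0}$. Since $|\nabla^0 f|_{g_0}=1$ almost everywhere on $\{0<f<r\}$, the $g_0$-coarea formula applied to $\lambda^{n-1}$ gives
\[ \int_0^r |\partial V_t\cap \Int N|_g \,dt = \int_0^r \!\!\int_{f^{-1}(t)\cap N} \lambda^{n-1}\,d\mc H^{n-1}_{g_0}\,dt = \int_{A}\lambda^{n-1}\,dV_{g_0}, \]
where $A:=N\cap \mc N_r^0(D)\setminus D$.

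Now apply H\"older's inequality with exponents $\tfrac{n}{n-1}$ and $n$ to the right-hand side:
\[ \int_A \lambda^{n-1}\,dV_{g_0}\le \left(\int_A \lambda^n\,dV_{g_0}\right)^{\frac{n-1}{n}}\left(\int_A dV_{g_0}\right)^{\frac{1}{n}}= |A|_g^{\frac{n-1}{n}}\,|A|_{g_0}^{\frac{1}{n}}. \]
Combining the two displays, the integral $\int_0^r|\partial V_t\cap \Int N|_g\,dt$ is bounded by $|A|_g^{(n-1)/n}|A|_{g_0}^{1/n}$, so by the mean value theorem there exists some $t_*\in(0,r)$ with
\[ |\partial V_{t_*}\cap \Int N|_g \le \frac{1}{r}\,|A|_g^{\frac{n-1}{n}}\,|A|_{g_0}^{\frac{1}{n}}. \]
Setting $V:=V_{t_*}$ yields the conclusion.

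I do not expect a substantive obstacle: the main subtleties are purely technical. First, one should choose $t_*$ to be a regular value of $f$ (which holds for a.e.\ $t$ by Sard / the coarea formula) so that $\partial V_{t_*}\cap \Int N$ is genuinely rectifiable; this is standard. Second, one needs the conformal scaling $d\mc H^{n-1}_g=\lambda^{n-1}\,d\mc H^{n-1}_{g_0}$, which is immediate from $g=\lambda^2 g_0$. If desired, one can finally smooth $V_{t_*}$ slightly to make it a compact domain with piecewise smooth boundary without increasing the boundary area by more than an arbitrarily small amount, since the level sets of a Lipschitz function are already rectifiable.
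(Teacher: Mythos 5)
Your proof is correct and uses essentially the same length-area argument as the paper: both take sublevel sets of $f=\dist_{g_0}(\cdot,D)$, apply the coarea formula, and close with H\"older to get the mixed $|A|_g^{(n-1)/n}|A|_{g_0}^{1/n}$ bound followed by the mean value theorem. The only cosmetic difference is that you run the coarea formula in $g_0$ and carry the conformal factor $\lambda^{n-1}$ explicitly, whereas the paper runs it in $g$ and then invokes the conformal identity $|\nabla f|_g^n\,d\mc H^n(g)=|\nabla^0 f|_{g_0}^n\,d\mc H^n(g_0)$; these are equivalent bookkeeping choices.
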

\begin{proof}
We present the proof in \cite{GL17}*{Theorem 3.4} here. For $x\in M$, denote by 
\[f(x)=\dist_{g_0}(x,D).  \]
By the co-area formula,
\begin{align*}
\int_0^r|f^{-1}(t)\cap\Int N|dt&=\int_{f^{-1}(0,r)\cap N}|\nabla f| d\mc H^n(g)\\
&\leq \Big(\int_{f^{-1}(0,r)\cap N}|\nabla f|^nd \mc H^n(g)\Big)^{\frac{1}{n}}\cdot |f^{-1}(0,r)\cap N|^{\frac{n-1}{n}}\\
&=|f^{-1}(0,r)\cap N|^{\frac{1}{n}}_{g_0}\cdot |f^{-1}(0,r)\cap N|^{\frac{n-1}{n}}.
\end{align*}
Here the last equality follows from $|\nabla f|^nd\mc H^n(g)=|\nabla^0 f|^nd\mc H^n(g_0)$. Note that 
\[f^{-1}(0,r)= \mc N^0_r(D)\setminus D.\]
Hence Proposition \ref{prop:length area} is proved.
\end{proof}

\subsection{Tree decomposition}\label{subsec:tree}
Let $\alpha=\overline{\alpha_1\alpha_2\cdots \alpha_m}$ be an ordered binary array with $\alpha_j\in\{0,1\}$. Then we define $|\alpha|=m$. For two binary arrays $\alpha$ and $\beta$, we say $\alpha \preceq \beta$ if $\alpha_j=\beta_j$ for all $j\leq |\alpha|$. We say $\Lambda$ is an {\em admissible tree} provided the following holds:
\begin{itemize}
	\item if $\alpha\in\Lambda$, then $\beta\in\Lambda$ for any $\beta\preceq \alpha$;
	\item $\overline{\alpha0}\in \Lambda$ if and only if $\overline{\alpha1}\in\Lambda$;
\end{itemize}  
Denote by $\partial \Lambda= \{\alpha\in\Lambda: \text{ if } \beta\in\Lambda \text{ with } \alpha\preceq\beta, \text{ then } \beta =\alpha\}$.

Let $\Lambda$ be an admissible tree and $\lambda\in(0,1/2]$. For any positive real number $X\geq 1$, we say a sequence of real numbers $\{X_\alpha\}$ is a {\em$(\Lambda, \lambda)$-decomposition} if 
\begin{itemize}
	\item $X=X_0+X_1$ and $X_i> \lambda X$ for $i\in\{0,1\}$;
	\item $X_\alpha=X_{\oli{\alpha0}}+X_{\oli{\alpha1}}$ and $X_{\oli{\alpha i}}\geq \lambda X_\alpha$ for all $\alpha\in\Lambda\setminus \partial\Lambda$ and $i\in\{0,1\}$;
	\item $X_\alpha\geq 1$ for all $\alpha\in \Lambda$.
\end{itemize}

In this section, $\lambda\in (0,1/2)$ is a constant. Let
\[  \wti \lambda =\big[\lambda^{\frac{n-1}{n}}+(1-\lambda)^{\frac{n-1}{n}}-1\big]^{-1}.\]
Then for any $t\in[\lambda,1-\lambda]$, we have
\begin{equation}\label{eq:for wti lambda}
\wti \lambda \cdot \big(t^{\frac{n-1}{n}}+(1-t)^{\frac{n-1}{n}}-1\big)\geq 1.
\end{equation}
For any $X\geq 1$, we define
\[  \mc N(X):= \sup\Big\{ X^{\frac{n-1}{n}}+\sum_{\alpha\in\Lambda} X_\alpha^{\frac{n-1}{n}}: \{X_\alpha\} \text{ is a $(\Lambda,\lambda)$-decomposition for some admissible tree } \Lambda \Big \}.\]

The main result in this subsection is that $\mc N(X)$ has linear growth.
\begin{proposition}\label{prop:linear growth of N}
	For any $X\geq1$, we have
	\[ \mc N(X)+\wti \lambda X^{\frac{n-1}{n}}\leq (1+\wti \lambda) X. \]
\end{proposition}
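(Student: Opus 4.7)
The plan is to prove by induction on $|\Lambda|$ the slightly strengthened statement that for every $X\geq 1$ and every $(\Lambda,\lambda)$-decomposition $\{X_\alpha\}$ of $X$,
\[
(1+\wti\lambda)\, X^{\frac{n-1}{n}} \;+\; \sum_{\alpha\in\Lambda} X_\alpha^{\frac{n-1}{n}} \;\leq\; (1+\wti\lambda)\, X.
\]
Taking the supremum over $\{X_\alpha\}$ and $\Lambda$ yields exactly $\mc N(X)+\wti\lambda X^{(n-1)/n}\leq (1+\wti\lambda)X$. The base case $\Lambda=\emptyset$ reduces to $(1+\wti\lambda)X^{(n-1)/n}\leq(1+\wti\lambda)X$, which is immediate from $X\geq 1$.

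The engine of the induction is a one-step inequality: for any $X\geq 1$ and any split $X=X_0+X_1$ with $X_i\geq \lambda X$,
\[
(1+\wti\lambda)\, X^{\frac{n-1}{n}} \;\leq\; \wti\lambda\, X_0^{\frac{n-1}{n}} + \wti\lambda\, X_1^{\frac{n-1}{n}}.
\]
Writing $X_0=tX$ with $t\in[\lambda,1-\lambda]$ and factoring out $X^{(n-1)/n}$, this reduces to $1+\wti\lambda\leq \wti\lambda\bigl(t^{(n-1)/n}+(1-t)^{(n-1)/n}\bigr)$, which is exactly \eqref{eq:for wti lambda} rearranged. The definition of $\wti\lambda$ is calibrated precisely so that this bound is tight at the extreme splits $t=\lambda,\,1-\lambda$.

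For the inductive step, assume $\Lambda\neq\emptyset$, so $\oli{0},\oli{1}\in\Lambda$. For $i\in\{0,1\}$ form the subtree $\Lambda_i:=\{\beta:\oli{i\beta}\in\Lambda,\ |\beta|\geq 1\}$; one checks $\Lambda_i$ is admissible and that $\{X_{\oli{i\beta}}\}_{\beta\in\Lambda_i}$ is a $(\Lambda_i,\lambda)$-decomposition of $X_i$ (with $X_i\geq 1$ by the third bullet in the definition, since $\oli{i}\in\Lambda$). Split the sum $\sum_{\alpha\in\Lambda} X_\alpha^{(n-1)/n}$ into the two root children $X_0^{(n-1)/n}+X_1^{(n-1)/n}$ plus the contributions from $\Lambda_0$ and $\Lambda_1$, then use the one-step inequality to replace $(1+\wti\lambda)X^{(n-1)/n}$ by $\wti\lambda X_0^{(n-1)/n}+\wti\lambda X_1^{(n-1)/n}$. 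Regrouping, the left-hand side is bounded by
\[
\Bigl[(1+\wti\lambda)X_0^{\frac{n-1}{n}} + \sum_{\beta\in\Lambda_0}X_{\oli{0\beta}}^{\frac{n-1}{n}}\Bigr]
+\Bigl[(1+\wti\lambda)X_1^{\frac{n-1}{n}} + \sum_{\beta\in\Lambda_1}X_{\oli{1\beta}}^{\frac{n-1}{n}}\Bigr],
\]
and the inductive hypothesis applied to each bracket yields the bound $(1+\wti\lambda)X_0+(1+\wti\lambda)X_1=(1+\wti\lambda)X$.

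The main subtlety I anticipate is identifying the correct strengthened inductive invariant. A naive attempt to prove $\mc N(X)\leq(1+\wti\lambda)X-\wti\lambda X^{(n-1)/n}$ directly, without absorbing the extra $\wti\lambda X^{(n-1)/n}$ into the statement being inducted on, will fail to chain \eqref{eq:for wti lambda} across successive splits. Once the $(1+\wti\lambda)X^{(n-1)/n}$ is put into the invariant, \eqref{eq:for wti lambda} is precisely the calibration that makes the recursion telescope, and the rest is routine bookkeeping on subtrees.
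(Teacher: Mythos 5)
Your proof is correct and takes a genuinely different route from the paper's. The paper treats $\mc N(X)$ abstractly and runs a continuous-style induction on the real parameter $X$ in increments of length $\lambda$: it first derives the recursive inequality $\wti{\mc N}(X)\leq \sup_{\lambda\leq t\leq 1-\lambda}\bigl(\wti{\mc N}(tX)+\wti{\mc N}((1-t)X)\bigr)$ (where $\wti{\mc N}(X)=\mc N(X)+\wti\lambda X^{\frac{n-1}{n}}$), verifies the bound on the base interval $[1,2)$ (where the only admissible tree is empty), and then climbs by observing that $tX\leq Y-\lambda$ for $X\in[Y,Y+\lambda]$, $t\in[\lambda,1-\lambda]$. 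Your proof instead fixes a single $(\Lambda,\lambda)$-decomposition and inducts on $|\Lambda|$, proving the pointwise statement $(1+\wti\lambda)X^{\frac{n-1}{n}}+\sum_{\alpha\in\Lambda}X_\alpha^{\frac{n-1}{n}}\leq(1+\wti\lambda)X$ directly, with the one-step inequality playing the same role as \eqref{eq:for wti lambda} does for the paper. Both proofs rest on exactly the same calibration of $\wti\lambda$; the mechanism is the same, but the induction variable is different. Your combinatorial induction is cleaner in one respect: it sidesteps the bookkeeping the paper must implicitly do about which $t\in[\lambda,1-\lambda]$ actually give rise to achievable splits (one needs $tX\geq 1$ and $(1-t)X\geq 1$ for $\wti{\mc N}$ to be defined at those arguments), and it never needs to consider whether the supremum defining $\mc N$ is finite before bounding it. One cosmetic point: the paper's definition of a $(\Lambda,\lambda)$-decomposition uses strict inequality $X_i>\lambda X$ at the root but non-strict $X_{\overline{\alpha i}}\geq\lambda X_\alpha$ deeper in the tree, so your restricted families $\{X_{\overline{i\beta}}\}_{\beta\in\Lambda_i}$ technically satisfy only the non-strict version at their root; this mismatch is harmless (the one-step inequality only uses $\geq$), but if you want the induction to literally apply the same definition you should either relax the root condition to $\geq$ or note that the strictness plays no role.
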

\begin{proof}
	For any $(\Lambda,\lambda)$-decomposition $\{X_\alpha\}$, we have that
	\[  X^{\frac{n-1}{n}}+\sum_{\alpha\in\Lambda} X_\alpha^{\frac{n-1}{n}}\leq X^{\frac{n-1}{n}}+\mc N(X_0)+\mc N(X_1)\leq X^{\frac{n-1}{n}}+\sup_{\lambda\leq t\leq 1-\lambda} (\mc N(tX)+\mc N((1-t)X)).  \]
	This implies that 
	\[  \mc N(X)\leq X^{\frac{n-1}{n}}+\sup_{\lambda\leq t\leq 1-\lambda} (\mc N(tX)+\mc N((1-t)X)).\]
	Denote by $\wti {\mc N}(X)=\mc N(X)+\wti \lambda X^{\frac{n-1}{n}}$. Then 
	\begin{equation}\label{eq:N convex}
	 \wti{\mc N}(X)\leq \sup_{\lambda\leq t\leq 1-\lambda} (\wti {\mc N}(tX)+\wti {\mc N}((1-t)X)), 
	 \end{equation}
	where we used the fact \eqref{eq:for wti lambda}.
	For any $X\in[1,2)$, we have $\wti {\mc N}(X)=X^{\frac{n-1}{n}}+\wti \lambda X^{\frac{n-1}{n}}\leq (1+\wti \lambda )X$. Now we prove the inequality inductively. Suppose that it holds true for $X< Y$ ($Y\geq 2$). Then for any $X\in[Y,Y+\lambda]$ and $t\in[\lambda,1-\lambda]$, we have
	\[   tX\leq (1-\lambda)(Y+\lambda )\leq Y-\lambda.\]
	Hence 
	\[\wti{\mc N}(tX)\leq (1+\wti \lambda )tX \ \ \ \text { and } \ \ \ \wti{\mc N}((1-t)X)\leq (1+\wti \lambda )(1-t)X,\]
Together with \eqref{eq:N convex}, we conclude that
	\[  \wti {\mc N}(X)\leq  \sup_{\lambda\leq t\leq 1-\lambda} \big[(1+\wti \lambda)tX+(1+\wti \lambda)(1-t)X\big]=(1+\wti \lambda)X. \]
	This finishes the proof of Proposition \ref{prop:linear growth of N}.
\end{proof}

\section{Dividing conformally thin domains}\label{sec:thin}
Let $(M,g)$ be a closed Riemannian manifold and $g_0\in[g]$ with $\Ric_{g_0}(M)\geq-(n-1)$.  Denote by $B^0_r(p)$ the geodesic ball in $(M,g_0)$ with center $p$ and radius $r$. In this section, we divide the compact domains $N$ that geodesic balls in $(M,g_0)$ of radius $r$ satisfying
\[ |B_r^0(p)\cap N|\leq \alpha,  \ \ \  \forall p\in M,\]
 where $r$ and $\alpha$ are given constants. This kind of domains are called to be {\em conformally thin}. 

Denote by $v(r,n)$ the volume of the geodesic ball in an $n$-dimensional hyperbolic manifold (with sectional curvature $-1$). Denote by
\[ C(r)=\max_{0<t\leq r}\Big \{ 1+\Big[\frac{v(9r/2,n)}{v(r/2,n)}\Big] \Big\} . \]
Then in any complete Riemannian manifold with $\Ric\geq -(n-1)$, every geodesic ball of radius $4s$ can be covered by $C(r)$ many balls of radius $s$ for all $s\in(0,r]$. Note that $C(r)$ is a constant depending only on $r$ and $n$; c.f. \cite{CoMa}*{Example 2.1}.

Let $C_0=C_0(n)$ be the constant such that for $r<10$,
\[  v(r,n)\leq C_0r^n.\]
By the classical Bishop-Gromov inequality, a geodesic ball with radius $r<10$ in a Riemannian manifold with $\Ric\geq-(n-1)$ has the volume bounded by $C_0r^n$ from above. Let $K$ be the constant in Theorem \ref{thm:good cover for 1-width}.

\begin{lemma}\label{lem:good decomposition of thin part}
Let $N$ be a compact domain with (possibly empty) piecewise smooth boundary in some closed Riemannian manifold $(M,g)$. Suppose that there exist $\alpha>0$ and $r\in(0,1)$ satisfying 
	\[|B^0_r(p)\cap N|\leq \alpha\]
	for all $p\in M$. Then $N$ can be divided into finitely many open domains $\{ V_j \}_{j=1}^L$ by $\cup \partial V_j$ satisfying
	\begin{gather} 
	\label{item:bound boundary} \Big|\bigcup_{j=1}^{L}\partial V_j\cap \mathrm{Int}N\Big|\leq (C_1/r)\cdot |N|^{\frac{1}{n}}_{g_0}\cdot |N|^{\frac{n-1}{n}};\\
\label{item:bound volume} \omega_1(V_j,g)\leq C_1\alpha^{\frac{n-1}{n}}\ \ \text{ for } \ \ 1\leq j\leq L,
	\end{gather}
where $C_1=C(r/2)C(r)+(4C_0+1)K\cdot C(r)$.
\end{lemma}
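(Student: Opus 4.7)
The plan is a greedy Vitali-type construction of overlapping compact domains $\wti V_1,\dots,\wti V_m\subset N$ produced by the length-area method of Proposition~\ref{prop:length area}, combined with a Bishop-Gromov bounded multiplicity estimate that converts the per-piece bounds into the global boundary estimate \eqref{item:bound boundary}. The desired $V_j$'s will be the finitely many connected components of $N\setminus\bigcup_i\partial\wti V_i$, and the hypothesis $|B_r^0(p)\cap N|\leq\alpha$ together with Theorem~\ref{thm:good cover for 1-width} will give \eqref{item:bound volume}.

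First, I would inductively select $p_i\in M$: set $N_0=N$ and $N_i=N_{i-1}\setminus\wti V_i$, and at each step pick $p_i$ maximizing $|B_r^0(p)\cap N_{i-1}|$ over $p\in M$. Applying Proposition~\ref{prop:length area} to $D_i=B_{3r}^0(p_i)\cap N$ with neighborhood radius $r$ produces a compact domain $\wti V_i$ with $D_i\subset\wti V_i\subset B_{4r}^0(p_i)\cap N$ and
\[|\partial\wti V_i\cap\Int N|\leq (1/r)\cdot|A_i|_{g_0}^{1/n}\cdot|A_i|^{(n-1)/n},\]
where $A_i\subset(B_{4r}^0(p_i)\setminus B_{3r}^0(p_i))\cap N$. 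A key observation is that the chosen $p_i$'s are automatically $r$-separated in $g_0$: if $d_{g_0}(p_i,p_j)<r$ for some $j<i$, then $B_r^0(p_i)\subset B_{2r}^0(p_j)\subset B_{3r}^0(p_j)\cap N\subset\wti V_j$, forcing $|B_r^0(p_i)\cap N_{i-1}|=0$ and contradicting the fact that we keep iterating as long as this supremum is positive. The $r$-separation packs the disjoint balls $\{B_{r/2}^0(p_i)\}$ into the bounded $g_0$-volume region $\mc N_{3r/2}^0(N)$, so the process terminates after finitely many steps; at termination, $\sup_p|B_r^0(p)\cap N_m|=0$ and a countable cover of $N_m$ by $g_0$-balls of radius $r$ forces $|N_m|=0$.

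For the boundary estimate I next establish bounded multiplicity: for any $x\in M$, $x\in B_{4r}^0(p_i)$ is equivalent to $p_i\in B_{4r}^0(x)$, and by $r$-separation the disjoint balls $\{B_{r/2}^0(p_i):x\in B_{4r}^0(p_i)\}$ all lie in $B_{9r/2}^0(x)$; Bishop-Gromov volume comparison at each $p_i$ then bounds $\#\{i:x\in B_{4r}^0(p_i)\}$ by a constant depending only on $r$ and $n$, which is absorbed by the factor $C(r/2)C(r)$ in $C_1$. Summing the per-piece bounds and applying H\"older's inequality yields
\[\sum_i|\partial\wti V_i\cap\Int N|\leq (1/r)\Big(\sum_i|A_i|_{g_0}\Big)^{1/n}\Big(\sum_i|A_i|\Big)^{(n-1)/n};\]
then the multiplicity bound dominates both $\sum_i|A_i|_{g_0}$ and $\sum_i|A_i|$ by a constant multiple of $|N|_{g_0}$ and $|N|$ respectively, producing \eqref{item:bound boundary}.

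Finally, I take $\{V_j\}$ to be the connected components of $N\setminus\bigcup_i\partial\wti V_i$, so $\bigcup_j\partial V_j\cap\Int N\subset\bigcup_i\partial\wti V_i\cap\Int N$. Since $|N\setminus\bigcup_i\wti V_i|\leq|N_m|=0$ while each $V_j$ has positive $g$-volume, a short connectedness argument (each $V_j$ is either entirely inside or entirely outside each $\wti V_i$) implies every $V_j$ is contained in some single $\wti V_{i_j}$. Covering $B_{4r}^0(p_{i_j})$ by $C(r)$ $g_0$-balls of radius $r$ (the defining property of $C(r)$) and using the hypothesis gives $|V_j|\leq C(r)\alpha$; Bishop-Gromov gives $|V_j|_{g_0}\leq C_0(4r)^n$; plugging these into Theorem~\ref{thm:good cover for 1-width} yields $\omega_1(V_j,g)\leq(4C_0+1)K\cdot C(r)\cdot\alpha^{(n-1)/n}$. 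The main obstacle is the bounded multiplicity step: the overlapping nature of the $\wti V_i$ is exactly what makes the greedy construction work, but it forces a comparison-geometry argument that depends essentially on $\Ric_{g_0}\geq-(n-1)$ and on the smallness assumption $r<1$.
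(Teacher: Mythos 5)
Your proposal is correct and follows essentially the same strategy as the paper: greedily choose centers $p_i$, apply Proposition~\ref{prop:length area} between radii $3r$ and $4r$, derive a bounded-multiplicity estimate from $r$-separation and Bishop--Gromov, and feed the resulting small domains into Theorem~\ref{thm:good cover for 1-width}. The one structural difference is how the pieces are made disjoint. In the paper, each $V_{j+1}$ is constructed inside the remainder $N\setminus\bigcup_{i\leq j}V_i$, so the pieces are disjoint from the outset and the cover $N=\bigcup\overline V_j$ is immediate; the multiplicity bound is then only needed for the balls $B^0_{4r}(p_j)$ in the H\"older step. You instead build overlapping compact domains $\wti V_i\subset B^0_{4r}(p_i)\cap N$ with the same length-area bound, and then pass to the connected components of $N\setminus\bigcup_i\partial\wti V_i$. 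This works and ultimately gives the same constants, because (i) the bounded multiplicity of the $B^0_{4r}(p_i)$ controls both $\sum|A_i|$ and $\sum|A_i|_{g_0}$, and (ii) $|N\setminus\bigcup\wti V_i|=0$ together with your connectedness observation places each component inside a single $\wti V_{i_j}$, so the volume and $g_0$-volume bounds transfer. The extra step is harmless, but note two small slips in the write-up: the inclusion should read $B^0_r(p_i)\cap N\subset B^0_{2r}(p_j)\cap N\subset B^0_{3r}(p_j)\cap N=D_j\subset\wti V_j$ (your chain $B^0_{2r}(p_j)\subset B^0_{3r}(p_j)\cap N$ omits the intersection with $N$), and you should say explicitly that finiteness of the collection $\{\wti V_i\}$, hence finiteness of $\{V_j\}$, uses compactness of $M$ to get a uniform positive lower bound for $|B^0_{r/2}(p)|_{g_0}$, since Bishop--Gromov alone gives only an upper bound.
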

\begin{proof}
Since $\Ric_{g_0}(M)\geq -(n-1)$ and $r<1$, then we have 
\begin{equation}\label{eq:bound B4r for g0}	
|B_{4r}^0(p)|_{g_0}\leq C_0(4r)^n.
	\end{equation}

Now we construct $\{V_j\}$ inductively. Let $V_0=\emptyset$. Suppose we have $V_0,\cdots,V_j$ and $M\setminus \cup_{i=1}^j\overline V_i\neq \emptyset$. Then we take $p_{j+1}\in M\setminus \cup_{i=0}^j\overline V_i$ such that for all $p\in M$,
\[ \Big|N\cap B_r^0(p_{j+1})\setminus \bigcup_{i=0}^jV_i\Big|\geq\Big |N\cap B_r^0(p)\setminus \bigcup_{i=0}^jV_i\Big| .\]
Note that $B_{4r}^0(p_{j+1})$ is covered by $C(r)$ many balls of radius $r$. It follows that 
\[ \Big|N\cap B_{4r}^0(p_{j+1})\setminus \bigcup_{i=0}^jV_i\Big|\leq C(r)\Big|N\cap B_r^0(p_{j+1})\setminus \bigcup_{i=0}^jV_i\Big|\leq C(r)\alpha.\]
Then by Proposition \ref{prop:length area}, we take $V_{j+1}$ satisfying
\begin{equation}\label{eq:construct Vj+1}
B_{3r}^0(p_{j+1})\cap N\setminus \bigcup_{i=0}^jV_i\subset V_{j+1}\subset B_{4r}^0(p_{j+1})\cap N\setminus \bigcup_{i=0}^jV_i
\end{equation}
and
\begin{equation} \label{eq:bound boundary of Vj}
\Big|\partial V_{j+1}\cap\Int (N\setminus \bigcup_{i=0}^jV_i)\Big|\leq \frac{1}{r}\cdot |B_{4r}^0(p_{j+1})\cap N|^{\frac{n-1}{n}}\cdot |B_{4r}^0(p_{j+1})\cap N|_{g_0}^{\frac{1}{n}} .
\end{equation}
By Theorem \ref{thm:good cover for 1-width},
\begin{equation}\label{eq:bound 1-width for Uj}
 \omega_1(V_{j+1},g)\leq K|V_{j+1}|^{\frac{n-1}{n}}(1+\big|B_{4r}^0(p_{j+1})\big|_{g_0}^{\frac{1}{n}})\leq (4C_0+1)K\cdot C(r)\alpha^{\frac{n-1}{n}}.
 \end{equation}
 Here we used \eqref{eq:construct Vj+1} and \eqref{eq:bound B4r for g0} in the last inequality. 
 
Observe that $p_{j+1}\notin B_{2r}(p_i)$ for $i\leq j$, which implies that 
 \[ B_r^0(p_{j+1})\cap B_r^0(p_i) =\emptyset, \ \ \ 1\leq i\leq j.\]
Then there exists $L\geq 1$ such that 
 \[   N=\bigcup_{j=1}^L\overline V_j.\]
 It remains to prove that these open sets satisfy our requirements. We first prove that every $x\in M$ is contained in at most $C(r/2)\cdot C(2r)$ many $V_j$. Namely, if $x\in V_j$, then   $B^0_r(p_j)\subset B_{5r}^0(x)$. Now let
 \[  J(x)= \#\{V_j:1\leq j\leq L \text{ and } B_r^0(p_j)\subset B_{5r}^0(x) \}.\]
 Note that $B_{5r}^0(x)$ can be covered by $C(r/2)C(2r)$ many balls $\{B^0_{r/2}(z_i)\}$ in $M$. By taking $z_j$ such that $p_j\in B^0_{r/2}(z_j)$, then we have $B^0_{r/2}(z_j)\subset B_{r}^0(p_j)$. Thus $J(x)\leq C(r/2)C(2r)$.

 By \eqref{eq:bound boundary of Vj}, we have 
 \begin{align*} 
 \Big|\bigcup_{j=1}^{L}\partial V_j\cap \mathrm{Int}N\Big|&=\sum_{j=0}^{L-1}
\Big|\partial V_{j+1}\cap\Int (N\setminus \bigcup_{i=0}^jV_i)\Big|\\ 
&\leq   \sum_{j=1}^L \frac{1}{r}\cdot |B_{4r}^0(p_{j})\cap N|^{\frac{n-1}{n}}\cdot |B_{4r}^0(p_{j})\cap N|_{g_0}^{\frac{1}{n}} \\
 &\leq \frac{1}{r} \cdot \Big(\sum_{j=1}^L|B_{4r}^0(p_{j})\cap N|\Big)^{\frac{n-1}{n}}\cdot\Big(\sum_{j=1}^m |B_{4r}^0(p_{j})\cap N|_{g_0}\Big)^{\frac{1}{n}}\\
 &\leq C(r/2)C(r)\cdot \frac{1}{r}\cdot |N|^{\frac{n-1}{n}}\cdot |N|_{g_0}^{\frac{1}{n}}.
 \end{align*}
Together with \eqref{eq:bound 1-width for Uj}, Lemma \ref{lem:good decomposition of thin part} follows by taking $C_1=C(r/2)C(r)+(4C_0+1)K\cdot C(r)$.

\end{proof}

\section{Dividing conformally thick domains}\label{sec:thick}
Let $(M,g)$ be a closed manifold and $g_0\in [g]$ such that $\Ric_{g_0}(M)\geq -(n-1)$. Let $N$ be a compact domain in $M$ with piecewise smooth boundary. In this section, we estimates the volume spectrum of the domains with small $g_0$-volume.

We first recall the isoperimetric inequality developed by Glynn-Adey-Liokumovich in \cite{GL17}, which is a consequence of the length-area method.
\begin{theorem}[\cite{GL17}*{Theorem 3.4}]\label{thm:isoperi by GL}
There exists a constant $c(n)$ such that the following holds: Let $U \subset M$ be an open subset. There exists an $(n-1)$-submanifold $\Sigma\subset U$ subdividing $U$ into two open sets $U_1$ and $U_2$ such that $|U_i| \geq 25^{-n}|U|$ and $|\Sigma| \leq c(n)\max\{1,|U|_{g_0}^{\frac{1}{n}}\}|U|^{\frac{n-1}{n}}$.
\end{theorem}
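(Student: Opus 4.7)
I would apply the length-area method of Proposition \ref{prop:length area} with $N = U$ and $D$ a compact subdomain of $U$ chosen so that (i) $|D| \geq 25^{-n}|U|$, (ii) $|U \setminus \mc N^0_r(D)| \geq 25^{-n}|U|$, and (iii) $|\mc N^0_r(D) \setminus D|_{g_0} \leq c(n) r^n \max\{1, |U|_{g_0}\}$. With these in hand, Proposition \ref{prop:length area} directly yields a hypersurface $\Sigma$ separating $D$ from the complement of $\mc N^0_r(D)$ in $U$, with
\[ |\Sigma| \leq r^{-1} \cdot |\mc N^0_r(D) \setminus D|_{g_0}^{1/n} \cdot |U|^{(n-1)/n} \leq c(n) \max\{1, |U|_{g_0}^{1/n}\} |U|^{(n-1)/n}, \]
and the two open components $U_1, U_2$ of $U \setminus \Sigma$ each have $g$-mass at least $25^{-n}|U|$ by (i)--(ii).

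\textbf{Scale selection via Bishop--Gromov.} Since $\Ric_{g_0} \geq -(n-1)$, the Bishop--Gromov inequality bounds the $g_0$-volume of any $g_0$-ball of radius $s \leq 1$ by $C(n) s^n$. I would work at scale $r = \min\{1, |U|_{g_0}^{-1/n}\}$, so that a single $g_0$-ball of radius $2r$ has $g_0$-volume at most $C(n) r^n$. This makes condition (iii) automatic whenever $D$ is a union of a dimensionally-bounded number of disjoint $g_0$-balls of radius $r$ intersected with $U$; the role of the $\max\{1, |U|_{g_0}^{1/n}\}$ factor in the theorem is precisely to absorb $r^{-1}$ after this substitution.

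\textbf{Greedy construction of $D$.} For (i)--(ii), I would pick disjoint $g_0$-balls $B^0_r(p_1), B^0_r(p_2), \ldots$ greedily, at each step maximizing $|B^0_r(p_j) \cap U|$ subject to disjointness from all previously chosen balls, and stop as soon as $|D| := |\bigcup_j B^0_r(p_j) \cap U|$ first crosses $25^{-n}|U|$. If the first selected ball already satisfies $|B^0_r(p_1) \cap U| \in [25^{-n}|U|, (1-25^{-n})|U|]$, stop with $k=1$; otherwise every ball added contributes less than $25^{-n}|U|$, so at the stopping time $|D| < 2 \cdot 25^{-n}|U| \leq (1-25^{-n})|U|$ for $n \geq 1$. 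Condition (ii) follows from $|D| \leq (1-25^{-n})|U|$ combined with a Vitali-type $5r$-enlargement argument controlling $|\mc N^0_r(D) \cap U|$ against $|D|$.

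\textbf{Main obstacle.} The hard part is uniformly bounding the number $k$ of balls selected so that the annular $g_0$-volume in (iii) stays of order $r^n$. A naive greedy could in principle append many tiny balls with rapidly decreasing masses and thereby blow up the annular volume; I would prevent this either by truncating the greedy as soon as an added ball contributes less than $25^{-n}|U|/N(n)$ for a dimensional $N(n)$ and then closing up by passing to $5r$-enlargements via Vitali, or by exploiting maximality of the disjoint family to bound $k$ through the $5r$-covering lemma $\sum_j |B^0_{5r}(p_j) \cap U| \geq |U|$. A minor secondary point is that $\Sigma$ produced by Proposition \ref{prop:length area} lies in $\Int U$ and inherits piecewise smoothness from the coarea construction, so the submanifold regularity claimed in the statement is automatic.
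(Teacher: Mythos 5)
The paper does not prove this statement: it is quoted directly as \cite{GL17}*{Theorem 3.4} and used as a black box, so there is no internal proof to compare against. Your attempt therefore has to be judged as a reconstruction of the argument in \cite{GL17}. You do have the right raw ingredients (Proposition \ref{prop:length area} together with Bishop--Gromov, and a scale $r=\min\{1,|U|_{g_0}^{-1/n}\}$ consistent with the form of the bound), but the plan has genuine gaps, some of which you flag yourself under ``Main obstacle'' without closing them.

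Concretely, two things fail. First, your greedy stopping rule omits the case $|B^0_r(p_1)\cap U|>(1-25^{-n})|U|$, which happens precisely when the $g$-mass concentrates inside a single small $g_0$-ball; you only handle the cases ``first ball lands in the window'' and ``every ball contributes $<25^{-n}|U|$.'' In the omitted case $D=B^0_r(p_1)\cap U$ already violates (ii), and shrinking to a smaller ball with $g$-mass exactly $25^{-n}|U|$ does not help, since the enlargement $\mc N^0_r(D)\setminus D$ can still absorb nearly all of $|U|$. Second, neither of your proposed fixes for bounding $k$ actually delivers (i)--(iii) simultaneously: truncating once a ball contributes $<25^{-n}|U|/N(n)$ bounds $k$ but may strand $|D|$ far below $25^{-n}|U|$, and you give no argument that passing to $5r$-enlargements restores (i) without ruining the annular bound (iii); while the inequality $\sum_j|B^0_{5r}(p_j)\cap U|\geq |U|$ is a lower bound on a growing sum, not an upper bound on the number of selected balls. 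Your (ii) argument also tacitly needs $|B^0_{2r}(p_j)\cap U|\leq C(n)|B^0_r(p_j)\cap U|$ with $C(n)<25^n$, which the greedy does not give (it only controls $|B^0_r(q)\cap U|$ at the greedily chosen centers, not at covering centers) and which the $\Ric_{g_0}\geq -(n-1)$ doubling constant need not satisfy. The route actually taken in \cite{GL17} is structurally different: one slices by the distance function $\dist_{g_0}(\cdot,p)$ from a single well-chosen center over a window of fixed $g_0$-width, using continuity and monotonicity of $s\mapsto|B^0_s(p)\cap U|$ to locate a balancing scale, with a separate treatment of the case where the $g$-mass concentrates in a thin $g_0$-annulus. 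You should consult \cite{GL17}*{Theorem 3.4} rather than attempt to rebuild it from the sketch above.
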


Now we are ready to prove the main result of this section.

\begin{theorem}\label{thm:cover of thick part}
	There exists $C_2=C_2(n)$ satisfying the following: for every positive integer $k$, each closed $n$-dimensional Riemannian manifold $(M,g)$ and compact domain $N\subset M$ with $|N|_{g_0}\leq 1$, there exists a collection of compact domains $\{U_j\}$ such that 
	$ N=\cup \oli{U}_j$	and 
\begin{equation}
 \big|\cup_j\partial U_j\cap(\mathrm{Int}N)\big|+ k\max_{j}\omega_1(U_j,g)\leq C_2|N|^{\frac{n-1}{n}}k^{\frac{1}{n}}.
\end{equation}
 As a corollary, $\omega_k(N,g)\leq C_2|N|^{\frac{n-1}{n}}k^{\frac{1}{n}}$.
\end{theorem}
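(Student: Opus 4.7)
The plan is to recursively subdivide $N$ via the isoperimetric inequality of Theorem~\ref{thm:isoperi by GL}, building a binary tree of subdomains whose leaves have $g$-volume comparable to $|N|/k$, and then combine the resulting data with the tree-decomposition estimate of Proposition~\ref{prop:linear growth of N} together with the first-width bound of Theorem~\ref{thm:good cover for 1-width}. Concretely, set $\lambda = 25^{-n}$ and $U_{\emptyset} := N$. Whenever a node $U_{\alpha}$ satisfies $|U_{\alpha}| \geq 25^{n} |N|/k$, apply Theorem~\ref{thm:isoperi by GL} to split it into two open subsets $U_{\oli{\alpha 0}}, U_{\oli{\alpha 1}}$ with $|U_{\oli{\alpha i}}| \geq 25^{-n}|U_{\alpha}|$, separated by a hypersurface $\Sigma_{\alpha}$ with $|\Sigma_{\alpha}| \leq c(n)\,|U_{\alpha}|^{\frac{n-1}{n}}$; here the standing hypothesis $|U_{\alpha}|_{g_{0}} \leq |N|_{g_{0}} \leq 1$ kills the $\max\{1,|U_{\alpha}|_{g_{0}}^{1/n}\}$ factor. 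Stop once $|U_{\alpha}| < 25^{n}|N|/k$, obtaining a tree $\Lambda$ whose leaves $\{U_{j}\}_{j \in \partial \Lambda}$ satisfy $|N|/k \leq |U_{j}| \leq 25^{n}|N|/k$ and $|U_{j}|_{g_{0}} \leq 1$.

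For the first-width contribution, Theorem~\ref{thm:good cover for 1-width} yields
\[\omega_{1}(U_{j},g) \leq K\,|U_{j}|^{\frac{n-1}{n}}\bigl(1+|U_{j}|_{g_{0}}^{\frac{1}{n}}\bigr) \leq 2K\cdot 25^{n-1}\,(|N|/k)^{\frac{n-1}{n}},\]
so $k\max_{j}\omega_{1}(U_{j},g) \leq 2K\cdot 25^{n-1}\,k^{\frac{1}{n}}|N|^{\frac{n-1}{n}}$. For the interior-boundary contribution, observe that $\bigcup_{j}\partial U_{j} \cap \Int N$ is precisely the union of the splitting hypersurfaces $\Sigma_{\alpha}$ over internal nodes, so it suffices to bound $\sum_{\alpha \in \Lambda}|U_{\alpha}|^{\frac{n-1}{n}}$. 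Setting $X_{\alpha} := k|U_{\alpha}|/|N|$, the additivity of volume under a split, the isoperimetric ratio $25^{-n}$, and the lower bound $|U_{\alpha}| \geq |N|/k$ valid at every node (enforced by the stopping rule) together show that $\{X_{\alpha}\}$ is a $(\Lambda,25^{-n})$-decomposition with root value $X = k$. Proposition~\ref{prop:linear growth of N} then gives $\mc N(k) \leq (1+\wti{\lambda})\,k$, which upon multiplying through by $(|N|/k)^{\frac{n-1}{n}}$ becomes $|N|^{\frac{n-1}{n}} + \sum_{\alpha \in \Lambda}|U_{\alpha}|^{\frac{n-1}{n}} \leq (1+\wti{\lambda})\,k^{\frac{1}{n}}|N|^{\frac{n-1}{n}}$, whence $\bigl|\cup_{j}\partial U_{j} \cap \Int N\bigr| \leq c(n)(1+\wti{\lambda})\,k^{\frac{1}{n}}|N|^{\frac{n-1}{n}}$.

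Combining the two estimates produces the required inequality with some $C_{2}=C_{2}(n)$. The corollary $\omega_{k}(N,g) \leq C_{2}|N|^{\frac{n-1}{n}}k^{\frac{1}{n}}$ then follows from the standard Gromov--Guth principle that any subdivision of $N$ into compact domains $\{U_{j}\}$ furnishes a $k$-sweepout whose mass is bounded by $\bigl|\cup_{j}\partial U_{j}\cap\Int N\bigr| + k\max_{j}\omega_{1}(U_{j},g)$. The main obstacle will be the bookkeeping required to realize the recursion as a genuine $(\Lambda,\lambda)$-decomposition in the sense of Subsection~\ref{subsec:tree}: one must verify that the threshold $25^{n}|N|/k$ simultaneously enforces $X_{\alpha}\geq 1$ at every node (so that Proposition~\ref{prop:linear growth of N} applies) and yields $X_{\alpha}\leq 25^{n}$ at every leaf (so that the $\omega_{1}$-bound is tight). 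The degenerate range $k \leq 25^{n}$, in which no split is performed and $N$ itself is the unique leaf, is handled directly by the $\omega_{1}$-bound and absorbed into $C_{2}$.
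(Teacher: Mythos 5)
Your proposal is correct and follows essentially the same route as the paper's proof: recursively split $N$ via Theorem~\ref{thm:isoperi by GL} until the leaves have $g$-volume $\approx |N|/k$, bound the leaf $\omega_1$'s via Theorem~\ref{thm:good cover for 1-width} (using $|U_j|_{g_0}\le 1$ to drop the $g_0$-term), and control the accumulated interior boundary by feeding the scaled volumes $X_\alpha = k|U_\alpha|/|N|$ into Proposition~\ref{prop:linear growth of N}. The only cosmetic difference is that the paper normalizes $|N|=1$ and uses the stopping threshold $50^n/k$ (so that leaf values $k_\alpha\ge 2^n$), taking $\lambda = 1/50^n$, whereas you use $25^n|N|/k$ and $\lambda = 25^{-n}$; both choices enforce $X_\alpha\ge 1$ at every node and satisfy $\lambda\in(0,1/2]$, so either yields a valid $(\Lambda,\lambda)$-decomposition and a dimensional constant $C_2(n)$.
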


\begin{proof}
	Without loss of generality, we assume that $|N|=1$. Let $K>1$ be the constant in Theorem \ref{thm:good cover for 1-width}. Then every compact domain $N'\subset M$ satisfies
	\begin{equation}\label{eq:bound for small k}
 \omega_1(N',g)\leq K|N'|^{\frac{n-1}{n}}.
	\end{equation}
	
	Now let $k>50^n$. Then by Theorem \ref{thm:isoperi by GL}, there exists an $(n-1)$-submanifold $\Sigma\subset M$ subdividing $N$ into two open sets $M_0$ and $N_1$ such that $ |N_1|\geq |N_0|\geq 1/25^n$ and $|\Sigma|\leq  c(n)\max\{1,|N|^{\frac{1}{n}}_{g_0}\}=c(n)$.
	
	Let $\overline{N}_\alpha$ be a compact domain of $N$, where $\alpha=\overline{i_1i_2\cdots i_{|\alpha|}}$ and $i_j\in\{0,1\}$. If $|N_\alpha|\geq 50^n/k$, then using Theorem \ref{thm:isoperi by GL} again, there exists an $(n-1)$-submanifold $\Sigma_\alpha$ subdividing $N_\alpha$ into two open sets $N_{\overline{\alpha0}}$ and $N_{\oli{\alpha1}}$ such that $|N_{\oli{\alpha 1}}|_g\geq |N_{\oli{\alpha0}}|\geq |N_\alpha|/25^n$ and 
	\begin{equation}\label{eq:bound Sigma_alpha}
	|\Sigma_\alpha|\leq c(n)|N_\alpha|^{\frac{n-1}{n}}\max\{1,|N_\alpha|_{g_0}^{\frac{1}{n}}\}=c(n)|N_\alpha|^{\frac{n-1}{n}}.
	\end{equation} 
	Note that we always have $k|N_{\alpha 1}|\geq k|N_{\oli{\alpha0}}|\geq k|N_\alpha|/25^n\geq 2^n$.
	
	Denote by $\Lambda$ the collection of $\alpha$ appeared in the previous process. Then $\Lambda$ is an admissible tree (see Subsection \ref{subsec:tree}). Recall that 
	\[\partial \Lambda=\{\alpha\in \Lambda: \overline{\alpha0}\notin\Lambda\}.\]
	Then we have 
	\[ N=\bigcup_{\alpha\in \partial \Lambda}N_\alpha, \]
	where $|N_\alpha|< 50^n/k$ and $\mathrm{Int}N_\alpha\cap \mathrm{Int} N_\beta=\emptyset$ for any $\alpha\neq \beta \in\partial \Lambda$.

Now we define $\{U_j\}$ as $\{N_\alpha\}_{\alpha\in\partial \Lambda}$. Then we prove that such a collection of domains satisfy our requirements. Denote by $k_\alpha=k|N_\alpha|$. Note that $|N_\alpha|< 50^n/k$. Then for each $\alpha\in\partial \Lambda$,
	\[ k_\alpha= k|N_\alpha|<50^n. \]
By \eqref{eq:bound for small k}, we have 
\[   k_\alpha\omega_1(N_\alpha,g)<  50^n\omega_1(N_\alpha,g)\leq  50^nK|N_\alpha|^{\frac{n-1}{n}}, \]
which implies that for all $\alpha\in\partial \Lambda$,
\begin{equation} \label{eq:for general omegak}
  k\omega_1(N_\alpha,g)\leq k/k_\alpha \cdot 50^nK|N_\alpha|^{\frac{n-1}{n}} = k^{\frac{1}{n}}/k_\alpha\cdot 50^nK\cdot (k_\alpha)^{\frac{n-1}{n}}\leq 50^nK\cdot k^{\frac{1}{n}}
\end{equation}
Here in the equality, we used $ k|N_\alpha|= k_\alpha$.

	\begin{claim}\label{claim:bound Sigma}
		There exists $K_1(n)$ depending only on $n$ such that 
		\[  \Big|\bigcup_{\alpha\in\partial \Lambda}\partial N_\alpha\cap \Int N\Big|\leq K_1(n)k^{\frac{1}{n}}.\]
	\end{claim}
	\begin{proof}[Proof of Claim \ref{claim:bound Sigma}]
	Note that $\{k_\alpha\}$ is a $(1/50^n,\Lambda)$-decomposition. Then by Proposition	\ref{prop:linear growth of N} (by letting $\lambda=1/50^n$),
	\begin{equation}\label{eq:using decomposition}
	k^{\frac{n-1}{n}}+\sum_{\alpha\in\Lambda\setminus\partial \Lambda} k_\alpha^{\frac{n-1}{n}}\leq (1+\wti \lambda)k,
	\end{equation}
	where  $\wti \lambda$ is defined by \eqref{eq:for wti lambda}. Then we have
		\begin{align*}
		\Big|\bigcup_{\alpha\in\partial \Lambda}\partial N_\alpha\cap \Int N\Big|&=|\Sigma|+\sum_{\alpha\in\Lambda\setminus \partial \Lambda}|\Sigma_\alpha|\\
		&\leq  c(n)\Big(|N|^{\frac{n-1}{n}}+\sum_{\alpha\in\Lambda\setminus\partial \Lambda}|N_\alpha|^{\frac{n-1}{n}}\Big)\\
		&\leq c(n)k^{-\frac{n-1}{n}}\Big( k^{\frac{n-1}{n}}+\sum_{\alpha\in\Lambda\setminus\partial \Lambda} k_\alpha^{\frac{n-1}{n}} \Big)\\
		&\leq 2 c(n)\cdot k^{\frac{1}{n}}(1+\wti \lambda).
		\end{align*}
		Here the first inequality is from \eqref{eq:bound Sigma_alpha}; the last one follows from \eqref{eq:using decomposition}.
		
		Let 
		\[  K_1(n)=	2c(n)(1+\wti \lambda).\]
		Then Claim \ref{claim:bound Sigma} is proved.
	\end{proof}
	
Recall that $\{U_j\}$ are exactly $\{N_\alpha\}_{\alpha\in\partial \Lambda}$. Using Claim \ref{claim:bound Sigma} and \eqref{eq:for general omegak}, we obtain
	\[  	 |\cup_j\partial U_j\cap \Int N|+k\max_j\omega_1(U_j,g)\leq (50^nK+K_1(n)) k^{\frac{1}{n}} . \]
	This is the desired inequality by letting $C_2=50^nK+K_1(n)$.
\end{proof}

\section{The conformal upper bounds}\label{sec:final bounds}
In this section, we prove the conformal upper bounds for the volume spectrum. We will first divide the manifold into conformally thin and thick domains and then Lemma \ref{lem:good decomposition of thin part} and Theorem \ref{thm:cover of thick part} can be applied respectively.

Recall that $|\cdot|$ and $|\cdot|_{g_0}$ are denoted as the Hausdorff measure with respect to $g$ and $g_0$. The following result is equivalent to Theorem \ref{thm:main}.
\begin{theorem}
	There exists a constant $C_3=C_3(n)$ such that for any $n$-dimensional closed Riemannian manifold $(M,g)$, we have 
	\[    \omega_k(M,g)\leq C_3|M|^{\frac{n-1}{n}}\max\{k^{\frac{1}{n}},|M|_{g_0}^\frac{1}{n}\},\]
	where $g_0$ is conformal to $g$ and $\Ric_{g_0}(M)\geq -(n-1)$.
\end{theorem}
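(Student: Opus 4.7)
The plan is to combine the two dividing results of the paper (Lemma \ref{lem:good decomposition of thin part} for conformally thin regions and Theorem \ref{thm:cover of thick part} for regions of $g_0$-volume $\leq 1$) through the Gromov--Guth sweepout-subdivision inequality $\omega_k(M,g)\leq |\bigcup_j\partial V_j|+k\max_j\omega_1(V_j,g)$, the cover $\{V_j\}$ being produced from a preliminary decomposition of $M$ into at most $k$ conformally thick pieces $D_1,\ldots,D_m$ of $g_0$-volume $\leq 1$ together with a conformally thin remainder $D_{m+1}$. First I observe that the desired inequality is invariant under $g\mapsto c^2g$ with $g_0$ left fixed (which remains conformal to $c^2g$), so I normalize $|M|_g=|M|_{g_0}$; the case $|M|_{g_0}\leq 1$ is immediate from Theorem \ref{thm:cover of thick part}, and the case $k<|M|_{g_0}$ follows from the case $k=\lceil |M|_{g_0}\rceil$ by monotonicity of $\omega_k$ in $k$. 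So I may assume $k\geq |M|_{g_0}\geq 1$ and $k$ larger than a dimensional constant, and set $\alpha:=|M|/k$ and $r:=\alpha^{1/n}/C_4(n)$, where $C_4(n)$ is chosen via Bishop--Gromov so that $|B_{4r}^0(p)|_{g_0}\leq 1$ for every $p\in M$.

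\textbf{Thin-thick decomposition.} I build $D_1,\ldots,D_m$ greedily in the spirit of the proof of Lemma \ref{lem:good decomposition of thin part}. With $E_{j-1}:=M\setminus\bigcup_{i<j}\oli D_i$, pick $p_j$ maximizing $|B_r^0(p)\cap E_{j-1}|_g$; stop when this maximum drops below $\alpha$, and otherwise use Proposition \ref{prop:length area} to extract $D_j$ with $B_{3r}^0(p_j)\cap E_{j-1}\subset D_j\subset B_{4r}^0(p_j)\cap E_{j-1}$ and $|\partial D_j\cap\Int E_{j-1}|\leq r^{-1}|B_{4r}^0(p_j)\cap E_{j-1}|_{g_0}^{1/n}|B_{4r}^0(p_j)\cap E_{j-1}|^{(n-1)/n}$. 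The maximality argument from Lemma \ref{lem:good decomposition of thin part} forces the balls $B_r^0(p_j)$ to be pairwise disjoint; since each contributes more than $\alpha$ of $g$-volume, $m\leq k$. The choice of $r$ gives $|D_j|_{g_0}\leq 1$, and stopping produces a remainder $D_{m+1}=E_m$ with $|B_r^0(p)\cap D_{m+1}|\leq \alpha$ for every $p$. H\"older's inequality together with the bounded-overlap constant $C(r/2)C(r)$ then yields
\[ \sum_{j=1}^m|\partial D_j\cap\Int E_{j-1}|\leq C(r/2)C(r)\,r^{-1}|M|_{g_0}^{1/n}|M|^{(n-1)/n}=C\,k^{1/n}|M|^{(n-1)/n}, \]
where the last equality uses $r=\alpha^{1/n}/C_4$ and $|M|=|M|_{g_0}$.

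\textbf{Subdividing the pieces.} To each $D_j$ with $j\leq m$ apply Theorem \ref{thm:cover of thick part} with integer weight $k_j:=\lceil 50^n k|D_j|/|M|\rceil$, producing a cover $\{U_i^j\}_i$ of $D_j$ with $|U_i^j|\leq 50^n|D_j|/k_j\leq \alpha$, $|U_i^j|_{g_0}\leq 1$, and $\big|\bigcup_i\partial U_i^j\cap\Int D_j\big|+k_j\max_i\omega_1(U_i^j,g)\leq C_2|D_j|^{(n-1)/n}k_j^{1/n}$. Theorem \ref{thm:good cover for 1-width} then gives $\omega_1(U_i^j,g)\leq 2K\alpha^{(n-1)/n}$. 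Summing the boundary contributions via H\"older, using $\sum_j|D_j|\leq|M|$ and $\sum_j k_j\leq(50^n+1)k$ (by $m\leq k$), produces $\sum_{j\leq m}|\bigcup_i\partial U_i^j\cap\Int D_j|\leq C\,k^{1/n}|M|^{(n-1)/n}$. To the conformally thin $D_{m+1}$ apply Lemma \ref{lem:good decomposition of thin part} with the same $r$ and $\alpha$, giving $\{V_i^{m+1}\}$ with $|\bigcup_i\partial V_i^{m+1}\cap\Int D_{m+1}|\leq (C_1/r)|M|_{g_0}^{1/n}|M|^{(n-1)/n}=C\,k^{1/n}|M|^{(n-1)/n}$ and $\omega_1(V_i^{m+1},g)\leq C_1\alpha^{(n-1)/n}$.

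\textbf{Conclusion and main obstacle.} The pieces $\{U_i^j\}_{j\leq m,i}\cup\{V_i^{m+1}\}_i$ tile $M$; the total boundary is the sum of the three estimates above, each $\leq C\,k^{1/n}|M|^{(n-1)/n}$, and the uniform bound $\max\omega_1\leq C\alpha^{(n-1)/n}$ yields $k\max\omega_1\leq C\,k^{1/n}|M|^{(n-1)/n}$, so the Gromov--Guth inequality gives $\omega_k(M,g)\leq C_3|M|^{(n-1)/n}k^{1/n}$. The delicate step is the calibration in the thin-thick decomposition: $r$ must be small enough that $|B_{4r}^0(p)|_{g_0}\leq 1$ so Theorem \ref{thm:cover of thick part} applies to each $D_j$, yet large enough that the length-area boundary estimate, summed via H\"older over the disjoint $r$-ball cores of total $g$-volume $\leq|M|$ and total $g_0$-volume $\leq|M|_{g_0}$, produces only $k^{1/n}|M|^{(n-1)/n}$; the scale $r\sim(|M|/k)^{1/n}$ is the unique balancing point, and the normalization $|M|_g=|M|_{g_0}$ is essential for the $g_0$-volume factor to collapse in the final estimate.
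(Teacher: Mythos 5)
Your proof is correct and follows essentially the same route as the paper: normalize $|M|_g=|M|_{g_0}$, build the greedy thin-thick decomposition at scale $r\sim(|M|/k)^{1/n}$ via Proposition \ref{prop:length area}, apply Theorem \ref{thm:cover of thick part} to the unit-$g_0$-volume pieces with volume-proportional weights $k_j$, apply Lemma \ref{lem:good decomposition of thin part} to the thin remainder, and sum with H\"older through the Gromov--Guth inequality. The only differences from the paper are cosmetic (the choice of integer weights $\lceil 50^n k|D_j|/|M|\rceil$ versus $[k_j]+1$, and summing the thin-thick boundaries via a bounded-overlap count of the $4r$-balls rather than via $m^{1/n}$ with disjoint $r$-balls), neither of which changes the argument.
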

\begin{proof}
Without loss of generality, we assume that $|M|(:=|M|_g)=|M|_{g_0}$. For any $k>100^n$, define
\[  r_k=\frac{1}{4}\cdot \Big(\frac{|M|}{2kC_0C(1)}\Big)^{\frac{1}{n}}, \ \ \ \text{ and } \ \ \ \alpha_k=\frac{|M|}{k}. \]	
Denote by 
\[  \bar k=\Big[\frac{|M|}{2C(1)}\Big]+1. \]
Then for any $k\geq \bar k$, we have $r_k< 1/4$.	
\begin{claim}\label{claim:thin-thick decomposition}
There exist  $m(\leq k-1)$ many domains $\{D_j\}_{j=1}^m$ such that 
\begin{itemize}
	\item $ |D_j|_{g_0}< 1$ and $|\cup\partial D_j|\leq 4C_0 C(1)|M|^{\frac{n-1}{n}}\cdot k^{\frac{1}{n}}$;
	\item $|B_{r_k}^0(p)\setminus \cup_{j=1}^m D_j|<\alpha_k$ for all $p\in M$.
\end{itemize}
\end{claim}
\begin{proof}[Proof of Claim \ref{claim:thin-thick decomposition}]
Let $D_0=\emptyset$. Then we construct $\{D_j\}$ inductively. Suppose we have $D_0,\cdots, D_j$. If $\big|B_{r_k}^0(p)\setminus \cup_{i=1}^jD_i\big|<\alpha_k$ for all $p\in M $, then we just let $m=j$. Otherwise, take $p_{j+1}$ such that for all $p\in M$,
\[\Big|B_{r_k}^0(p_{j+1})\setminus \bigcup_{i=0}^jD_i\Big|\geq \Big|B_{r_k}^0(p)\setminus \bigcup_{i=0}^jD_i\Big|.\]
Clearly, $p_{j+1}\notin B_{2r_k}^0(p_i)$ for all $i\leq j$ and $|B_{r_k}^0(p_{j+1})\setminus \cup_{i=1}^jD_i|_g\geq\alpha_k$. Note that $B_{4r_k}^0(p_{j+1})$ is covered by $C(r_k)$ many balls of radius $r_k$. Thus we have
\begin{equation}\label{eq:bound 4r ball by r ball}
 \Big|B_{4r_k}^0(p_{j+1})\setminus \bigcup_{i=0}^jD_i\Big|\leq C(r_k)\Big|B_{r_k}^0(p_{j+1})\setminus \bigcup_{i=0}^jD_i\Big|.  
 \end{equation}
Since $r_k<1$, we have that  
\begin{equation}\label{eq:bound volume with g0}
 \big|B_{4r_k}^0(p_{j+1})\big|_{g_0}\leq C_0\cdot (4r_k)^n. 
 \end{equation}
Then by Proposition \ref{prop:length area}, we can take $D_{j+1}$ satisfying
\[B_{3r_k}^0(p_{j+1})\setminus \bigcup_{i=0}^jD_i\subset D_{j+1}\subset B_{4r_k}^0(p_{j+1})\setminus \bigcup_{i=0}^jD_i\]
and 
\begin{align} \label{eq:bound partial Dj}
\Big|\partial D_{j+1}\cap \Int (M\setminus \bigcup_{i=0}^jD_i)\Big| &\leq  \frac{1}{r_k}\cdot \Big| B_{4r_k}^0(p_{j+1})\setminus \bigcup_{i=0}^jD_i\Big|_{g_0}^{\frac{1}{n}}\cdot \Big| B_{4r_k}^0(p_{j+1})\setminus \bigcup_{i=0}^jD_i\Big|^{\frac{n-1}{n}}\\
&\leq 4C_0C(r_k)\cdot \Big| B_{r_k}^0(p_{j+1})\setminus \bigcup_{i=0}^jD_i\Big|^{\frac{n-1}{n}}. \nonumber
\end{align}
Here in the last inequality, we used \eqref{eq:bound 4r ball by r ball} and \eqref{eq:bound volume with g0}. Then there exists an integer $m\geq 0$ such that after $m$ many steps, we have $\{D_j\}_{j=1}^m$ such that for all $p\in M$,
\[ \Big|B_{r_k}^0(p)\setminus \bigcup_{j=0}^m D_j\Big|<\alpha_k.   \]
This gives that these domains $\{D_j\}$ satisfy the second item.

Now we are going to verify that these domains satisfy the first requirement. From the fact of $|D_j|_g>\alpha_k=1/k$, we conclude that $m\leq k-1$. Recall that $D_{j}\subset B^0_{4r_k}(p_j)$. Then we have
\[ |D_j|_{g_0}\leq |B_{4r_k}^0(p_{j})|_{g_0}\leq C_0(4r_k)^n<1.  \]
Moreover,
\begin{align*}
\Big|\bigcup_{j=1}^m\partial D_j\Big|&= \sum_{j=0}^{m-1}\Big|\partial D_{j+1}\cap \Int (M\setminus \bigcup_{i=0}^jD_i)\Big| \\
&\leq 4C_0C(r_k)\sum_{j=0}^{m-1}\Big | B_{r_k}^0(p_{j+1})\setminus \bigcup_{i=0}^jD_i\Big|^{\frac{n-1}{n}}\\
&\leq 4C_0C(r_k)\cdot m^{\frac{1}{n}}\cdot \Big(\sum_{j=0}^{m-1}\Big|B_{r_k}^0(p_{j+1})\Big|\Big)^{\frac{n-1}{n}}	\\
&\leq 4C_0C(r_k)|M|^{\frac{n-1}{n}}\cdot k^{\frac{1}{n}}\leq 4C_0C(1)|M|^{\frac{n-1}{n}}\cdot k^{\frac{1}{n}}.	
	\end{align*}
Here the first inequality is from \eqref{eq:bound partial Dj}; we used the H\"older's inequality in the second one; the third one follows from the fact of $B_{r_k}^0(p_i)\cap B_{r_k}^0(p_j)=\emptyset$ for $i\neq j$; for the last one, we used $r_k\leq 1$.
So far, Claim \ref{claim:thin-thick decomposition} is proved.
	\end{proof}

Denote by $D_{m+1}=\overline{M\setminus \cup_{j=1}^m D_j}$ and $k_j= k|D_j|/|M|$ for all $1\leq j\leq (m+1)$. Note that $|D_j|_{g_0}\leq 1$. Then by Theorem \ref{thm:cover of thick part} (using $k=[k_j]+1$ and $N=D_j$ there), for each $1\leq j\leq m$, there exists a finite cover $\{\oli U_i^j\}_{i}$ of $D_j$ such that 
\begin{equation}\label{eq:bounds for Dj}
 \big|\cup_i\partial U_i^j\cap (\mathrm{Int}D_j)\big|+ k_j\max_{i}\omega_1(U_i^j, g)\leq C_2|D_j|^{\frac{n-1}{n}}(1+[k_j])^{\frac{1}{n}}\leq 2C_2|D_j|^{\frac{n-1}{n}}k_j^{\frac{1}{n}} ,
\end{equation}
which also implies
\begin{equation}\label{eq:bounds for kUij}
k\max_i\omega_1(U_i^j, g)\leq \frac{k}{k_j}\cdot 2C_2\Big(\frac{k_j}{k}\cdot |M| \Big)^{\frac{n-1}{n}}k_j^{\frac{1}{n}}=2C_2|M|^{\frac{n-1}{n}}k^{\frac{1}{n}}.
\end{equation}
Note that $|B_{r_k}^0(p)\cap D_{m+1}|\leq \alpha_k$ for each $p\in D_{m+1}$. Applying Lemma \ref{lem:good decomposition of thin part} ($\alpha=\alpha_k$ and $r=r_k$), $D_{m+1}$ can be subdivided into disjoint open sets $\{V_j\}$ by $\cup_{j=1}^{L}\partial V_j$ satisfying the following:
 \begin{gather}\label{eq:bound partial Vj} 
 \Big|\bigcup_{j=1}^{L}\partial V_j\cap \mathrm{Int}D_{m+1}\Big|\leq (C_4/r_k)\cdot |D_{m+1}|^{\frac{1}{n}}_{g_0}\cdot |D_{m+1}|^{\frac{n-1}{n}};\\
\label{eq:bound 1-width Vj} \omega_1(V_j, g)\leq C_4\alpha_k^{\frac{n-1}{n}}\ \  \text{ for } \ \ 1\leq j\leq L.
\end{gather}
Here $C_4=5C_0(K+C(1/2))C(1)>C(r_k/2)C(r_k)+(4C_0+1)K\cdot C(r_k)$. 

\medskip
Note that $M$ is covered by $\{{\oli D}_j\}_{j=1}^{m+1}$. Hence $M$ is subdivided into $\cup_{j=1}^m\{U_i^j\}_i\cup \{V_l\}_{l=1}^L$. Then by Gromov \cite{Gro88} and Guth \cite{Guth09} (see also \cite{GL17}*{Proof of Theorem 7.1}),
\begin{align*}
  \omega_k(M,g)&\leq \sum_{j=1}^m \sum_i\big|\partial U_i^j\cap \mathrm{Int}D_j\big|+\Big|\bigcup_{j=1}^m\partial D_j\Big|+\Big|\bigcup_{j=1}^{L}\partial V_j\cap \mathrm{Int}D_{m+1}\Big|+k\max_{i,j}\omega_1(U_i^j,g)+\\
  &\ \ +k\max_{1\leq j\leq L}\omega_1(V_j,g)\\
  &\leq \sum_{j=1}^m2C_2|D_j|^{\frac{n-1}{n}}k_j^{\frac{1}{n}}+ 4C_0C(1)|M|^{\frac{n-1}{n}}k^{\frac{1}{n}} + (C_4/r_k)\cdot |D_{m+1}|^{\frac{1}{n}}_{g_0}\cdot |D_{m+1}|^{\frac{n-1}{n}}+\\
  &\  \ +2C_2|M|^{\frac{n-1}{n}}k^{\frac{1}{n}}+C_4\alpha_k^{\frac{n-1}{n}}\cdot k\\
  &\leq 2C_2\Big(\sum_{j=1}^m|D_j|\Big)^{\frac{n-1}{n}}\Big(\sum_{j=1}^m k_j\Big)^{\frac{1}{n}}+4C_0C(1)|M|^{\frac{n-1}{n}}k^{\frac{1}{n}}+8C_0C(1)C_4\cdot|M|^{\frac{n-1}{n}}k^{\frac{1}{n}}+\\
  &\ \ +(2C_2+C_4)|M|^{\frac{n-1}{n}}k^{\frac{1}{n}}\\
  &\leq \big(4C_2+13C_0C(1)C_4\big )|M|_{ g}^{\frac{n-1}{n}}k^{\frac{1}{n}}.
  \end{align*}
Here the second inequality is from \eqref{eq:bounds for Dj}, \ref{eq:bound partial Vj}, \eqref{eq:bounds for kUij} and \eqref{eq:bound 1-width Vj} and Claim \ref{claim:thin-thick decomposition}; in the third inequality, we used the H\"older's inequality for the first item, and the fact $|D_{m+1}|_{g_0}\leq |M|_{g_0}=|M|_{g}$ for the third item. Then we conclude that for any $k\geq \bar k$,
\begin{equation}\label{eq:for large k}
\omega_k(M,g)\leq C_3|M|^{\frac{n-1}{n}}k^{\frac{1}{n}},
\end{equation}
where $C_3:=4C_2+13C_0C(1)C_4$.

If $\bar k=1$, then we are done. Otherwise, it remains to estimate $\omega_k(M,g)$ for $k< \bar k$. Note that in this case,
\[   \bar k\leq 2\frac{|M|}{2C(1)}\leq |M|=|M|_{g_0}. \]
Then by \eqref{eq:for large k},
\[\omega_{\bar k}(M,g)\leq C_3|M|^{\frac{n-1}{n}}{\bar k}^{\frac{1}{n}}\leq  C_3|M|^{\frac{n-1}{n}}|M|_{g_0}^\frac{1}{n} . \]
Recall that $\omega_k(M,g)\leq \omega_{\bar k}(M,g)$ for $1\leq k\leq \bar k$. Thus we conclude that for all $k\geq 1$,
\[ \omega_k(M,g)\leq C_3|M|^{\frac{n-1}{n}}(k^{\frac{1}{n}}+|M|_{g_0}^\frac{1}{n}).\]

\end{proof}

\appendix

\section{Proof of Theorem \ref{thm:good cover for 1-width}}\label{sec:proof of GL-theorem}
	\begin{proof}[Proof of Theorem \ref{thm:good cover for 1-width}]
		We follows the steps given by Glynn-Adey and Liokumovich in \cite{GL17}, where they proved this theorem for $N=M$. Here we give the outline and point out some necessary modifications.
		
		Suppose that $N$ has smooth boundary. For any $\epsilon_0\in (0,1)$, take $\bar r(M,N,\epsilon_0)$ such that:
		\begin{itemize}
			\item for every $x\in \partial N$, we have that $B_r(x)$ is $(1 + \epsilon_0)$-bilipschitz diffeomorphic to the Euclidean ball of radius $r$ and $B_r(x)\cap N$ is mapped onto a half-ball under the difformorphism. Denote by $B_{r}^+(x)=B_r(x)\cap N$;
			\item the monotonicity formula \cite{GLZ16}*{Theorem 3.4} holds.
		\end{itemize}
		
		From now on, we fix some $\epsilon_0<1$.

		\medskip
		\noindent{\bf Step 1: }{\em Suppose that $N$ has smooth boundary. There exists $\epsilon=\epsilon(M,N,\bar r)$ satisfying the following:  for any domain $D\subset N$ with $|D|<\epsilon$, there exists a collection of domains $D(=:D_0)\supset D_1\supset D_2\supset \cdots\supset D_m$ satisfying
			\begin{itemize}
				\item $D_m\subset \Int N$;
				\item $|\partial D_j\cap \Int N|\geq |\partial D_{j+1}\cap \Int N|$ for $0\leq j\leq m-1$;
				\item for $0\leq j\leq m-1$, $D_j\setminus D_{j+1}$ is contained in some ball of radius $\bar r$ and center $x\in \partial N$;
		\end{itemize}}
		\begin{proof}[Proof of Step 1]
			Suppose that $x\in \partial D_j\cap \partial N$, now we construct $D_{j+1}\subset D_j$. By the co-area formula, we can find $r'\in(3\bar r/4,\bar r/4)$ such that $\partial D_j\cap \Int N$ is transverse to $\partial B_{r'}(x)$ and 
			\[ |D_j\cap \partial B_{r'}(x)|\leq (8/\bar r)\cdot |D_j\cap B_{r'}(x)|.  \]
			Denote by $S=\llbracket D_j\cap \partial B_{r'}(x)\rrbracket$. Let $T$ be the minimizing current $T$ among all $T'\in \mc Z_{n-1}( B^+_{r'}(x),\partial  B^+_{r'}(x);\mb Z_2)$ with $\spt(\partial T'-\partial S)\subset \partial N$. Then by the regularity theory \cite{Mor03}*{Theorem 4.7} (see also \cite{GLWZ19}*{Theorem 4.7}), $T$ is induced by a free boundary hypersurface $\Sigma$ with $(n-8)$-dimensional singular set. By taking $\epsilon$ small enough, from the monotonicity formula \cite{GLZ16}*{Theorem 3.4}, $\Sigma\cap \partial N\cap B_{\bar r/2}(x)=\emptyset$. Using the monotonicity formula again, $\Sigma\cap B_{\bar r/4}(x)=\emptyset$. Note that by the isoperimetric choice \cite{LZ16}, there exists $V\subset B_{\bar r}^+(x)$ such that $\partial \llbracket V\rrbracket=T-S$ and the volume of $V$ is small. Hence $V$ does not contain $B^+_{\bar r/4}(x)$. Together with the fact of $\partial V$ does not intersect $B^+_{\bar r/4}(x)$, we conclude that $V\cap B_{\bar r/4}^+(x)=\emptyset$. Now we define
			\[D_{j+1}=D_j\cap (N\setminus (B^+_{\bar r}(x)\setminus V) ).\]
			Clearly, $D_j\setminus D_{j+1}$ is contained in $B_{\bar r}^+(x)$. Note that $T$ is minimizing in $B_{\bar r}^+(x)$. Then it is minimizing in $B_{\bar r}^+(x)\setminus V$, i.e.
			\[ |\Sigma\cap D_j|\leq |\partial D_j\cap \Int ( B^+_{\bar r}(x)\setminus V)|.\]
			This implies
			\[ |\partial D_{j}\cap \Int N|-|\partial D_{j+1}\cap \Int N|=|\partial D_j\cap \Int ( B^+_{\bar r}(x)\setminus V)|- |\Sigma\cap D_j|\geq0. \]
			Thus Step 1 is completed.	
		\end{proof}
		
		\medskip
		\noindent{\bf Step 2: }{\em Suppose that $N$ has smooth boundary. There exist constants $\beta_1=\beta_1(n)$ and $\epsilon=\epsilon(M,N,\bar r)$ such that for any domain $D\subset N$ with $|D|\leq \epsilon$, the following bound holds:
			\begin{equation}
			\omega_1(D,g)\leq \beta_1|D|^{\frac{n-1}{n}}+|\partial D\cap \Int N|.
			\end{equation} }
		\begin{proof}[Proof of Step 2]	
			Let $\{ D_j\}_{j=1}^m$ be the domains constructed in Step 1. Then repeating the process inside $N$ (see also \cite{GL17}*{Proposition 4.3}), there exists $D_m\supset D_{m+1}\supset\cdots\supset D_L$ such that 
			\begin{itemize}
				\item $|\partial D_j\cap \Int N|\geq |\partial D_{j+1}\cap \Int N|$ for $m\leq j\leq L-1$;
				\item for $m\leq j\leq L$, $D_j\setminus D_{j+1}$ is contained in some ball of radius $\bar r$ and center $x\in N$, where $D_{L+1}=\emptyset$;
			\end{itemize}
			By \cite{Guth07}, there exists $\beta_1=\beta_1(n)$ such that for $0\leq j\leq L$,
			\begin{equation}\label{eq:bound Dj-Dj+1}
			\omega_1(D_j\setminus D_{j+1},g)\leq \beta_1|D_j\setminus D_{j+1}|^{\frac{n-1}{n}}.
			\end{equation}
			Now let $\Phi_j$ be a sweepout of $D_j\setminus D_{j+1}$ having no concentration of mass. Then there exist lifting maps $\wti \Phi_j:[0,1]\rightarrow \mc C(D_j\setminus D_{j+1})$ such that 
			\[ \partial \circ\wti\Phi_j=\Phi_j \ \ \  \text{ for } \ \ \ 0\leq j\leq L.  \]
			Without loss of generality, we assume that $\wti \Phi_j(0)=0$, $\wti\Phi_j(1)=\llbracket D_j\setminus D_{j+1}\rrbracket$. By \cite{GL17}*{Proposition 2.3}, we can construct a sweepout of $D$ as follows: we first define $\wti \Phi:[0,1]\rightarrow \mc C(D)$ by
			\[ \wti \Phi(t)=\wti \Phi_{L-j}\Big((L+1)(t-\frac{j}{L+1})\Big) +\llbracket D_{L+1-j}\rrbracket \ \ \text{ for } \ \ \frac{j}{L+1}\leq t\leq \frac{j+1}{L+1}. \]
			Then $\Phi=\partial \circ \Phi$ is the desired sweepout, which has no concentration of mass. Such a construction gives that 
			\[ \omega_1(D,g)\leq \max_{0\leq j\leq L}\{\omega_1(D_j\setminus D_{j+1},g)+|\partial D_j\setminus \partial D| \}  .\]
			Together with \eqref{eq:bound Dj-Dj+1}, we have
			\[   \omega_1(D,g)\leq  \beta_1|D|^{\frac{n-1}{n}}+|\partial D\cap N|.\]

		\end{proof}

		\medskip
		\noindent{\bf Step 3: }{\em Suppose that $N$ has smooth boundary. There exists $\beta_2=\beta_2(n)$ such that for any domain $D\subset N$, the following bound holds
			\begin{equation}\label{eq:bound omega1 for smooth D}
			\omega_1(D,g)\leq \beta_2\cdot (1+|D|_{g_0}^{\frac{1}{n}})|D|^{\frac{n-1}{n}}+2|\partial D\cap \Int N|.
			\end{equation}}
		\begin{proof}[Proof of Step 3]
			We use the argument in \cite{GL17}*{Theorem 5.1}. Let $\epsilon_1=25^{-n}\cdot\epsilon$. Take $\beta_2(n)=\beta_1(n)+3c(n)\cdot \Big[1-(1-25^{-n})^{\frac{n-1}{n}}\Big]$. Here $c(n)$ is the constant in \cite{GL17}*{Lemma 3.4}. It follows that 
			\begin{equation}\label{eq:beta2 and cn}
			\Big[1-(1-25^{-n})^{\frac{n-1}{n}}\Big]\beta_2(n)\geq 3c(n).
			\end{equation}
			By Step 2, for $k\leq 25^n$, \eqref{eq:bound omega1 for smooth D} holds for $D$ with $|D|\leq k\epsilon_1$. We proceed by
			induction on $k$.
			
			Suppose the inequality holds for compact domains with volume at most $k\epsilon$. Then for any $D\subset N$ with $k\epsilon_1<|D|\leq (k+1)\epsilon_1$. By Theorem \ref{thm:isoperi by GL}, there exists a hypersurface $\Sigma$ subdividing $D$ into $D_0$ and $D_1$ such that $|D_j|\leq (1-25^{-n})|D|$ (for $j=0,1$) and  
			\begin{equation}\label{eq:Sigma bound by D}
			|\Sigma|\leq c(n)|D|^{\frac{n-1}{n}}(1+|D|_{g_0}^{\frac{1}{n}}).
			\end{equation}
			Then using the construction of sweepouts in Step 2, we have
			\begin{equation}\label{eq:bound omega1 inductively}
			\omega_1(D,g)\leq \max_{j\in\{0,1\}}\{ \omega_1(D_j,g)+|\partial D_j\setminus \partial D| \}.
			\end{equation}
			Note that for $j=0,1$,
			\[|D_j|\leq (1-25^{-n})|D|\leq |D|-\epsilon_1<(k+1)\epsilon_1-\epsilon_1<k\epsilon _1.\]
			Hence by the assumption, 
			\begin{align*}
			\omega_1(D_j,g)&\leq \beta_2\cdot (1+|D_j|_{g_0}^{\frac{1}{n}})|D_j|^{\frac{n-1}{n}}+2|\partial D_j\cap \Int N|\\
			&\leq  \beta_2\cdot (1+|D|_{g_0}^{\frac{1}{n}})|D|^{\frac{n-1}{n}}\cdot(1-25^{-n})^{\frac{n-1}{n}}+2|\partial D\cap \Int N|+2|\Sigma|\\
			&\leq (\beta_2-3c(n))(1+|D|_{g_0}^{\frac{1}{n}})|D|^{\frac{n-1}{n}}+2|\partial D\cap \Int N|+2|\Sigma|\\
			&\leq \beta_2\cdot (1+|D|_{g_0}^{\frac{1}{n}})|D|^{\frac{n-1}{n}}+2|\partial D\cap \Int N|-|\Sigma|.
			\end{align*}
			Here the third inequality is from \eqref{eq:beta2 and cn} and we used \eqref{eq:Sigma bound by D} in the last one. Then together with \eqref{eq:bound omega1 inductively}, we conclude that 
			\[ \omega_1(D,g)\leq  \beta_2\cdot (1+|D|_{g_0}^{\frac{1}{n}})|D|^{\frac{n-1}{n}}+2|\partial D\cap \Int N|. \]
			This finishes Step 3.
		\end{proof}   

		\medskip
		\noindent{\bf Step 4: }{\em We prove the theorem for general compact domain $N$ (having piecewise smooth boundary).}
		\begin{proof}[Proof of Step 4]
			Now let $N$ be a compact domain with piecewise smooth boundary. Then we have a tubular neighborhood $U$ of $N$ such that $U$ has smooth boundary and $|U|_{g_0}\leq 2|N|_{g_0}$ and $|U|\leq 2|N|$. Then by Step 3,
			\[\omega_1(U,g)\leq \beta_2\cdot (1+|U|_{g_0}^{\frac{1}{n}})|U|^{\frac{n-1}{n}}\leq 2\beta_2\cdot (1+|N|_{g_0}^{\frac{1}{n}})|N|^{\frac{n-1}{n}} .\]
			Then the desired inequality follows from 
			\[ \omega_1(N,g)\leq \omega_1(U,g)\]
			if we take $K=2\beta_2(n)$.
		\end{proof}   
		
		So far, Theorem \ref{thm:good cover for 1-width} is proved.
	\end{proof}

\bibliographystyle{amsalpha}
\bibliography{minmax}
\end{document}